\documentclass[12pt,bezier]{article}
\usepackage{times}
\usepackage{booktabs}
\usepackage{pifont}
\usepackage{floatrow}
\floatsetup[table]{capposition=top}
\usepackage{caption}
\usepackage{mathrsfs}
\usepackage[fleqn]{amsmath}
\usepackage{amsfonts,amsthm,amssymb,mathrsfs,bbding}
\usepackage{txfonts}
\usepackage{graphics,multicol}
\usepackage{graphicx}
\usepackage{color}
\usepackage{caption}
\captionsetup{%
  figurename=Fig.,
  tablename=Tab.
}
%\usepackage[notref,notcite]{showkeys}
%\captionsetup[table]{labelsep=space}
\usepackage{cite}
\usepackage{latexsym,bm}
\usepackage{indentfirst}
\usepackage{xcolor}
\usepackage[colorlinks=true,
linkcolor=blue,citecolor=blue,
urlcolor=blue]{hyperref}

\pagestyle{myheadings} \markright{} \textwidth 150mm \textheight 235mm \oddsidemargin=1cm
\evensidemargin=\oddsidemargin\topmargin=-1.5cm

\newtheorem{prob}{Problem}[section]
\newtheorem{lem}{Lemma}[section]
\newtheorem{thm}{Theorem}[section]

\theoremstyle{definition}

\addtocounter{section}{0}

\begin{document}
\title{A Max-Min problem on spectral radius and connectedness of graphs\footnote{Supported
by National Natural Science Foundation of China (Nos.\,12061074 and 12271362)}}
\author{
{\small Zhenzhen Lou$^{a}$,\ \ Changxiang He$^{a}$\footnote{Corresponding author. \\Email addresses: louzz@usst.edu.cn (Z. Lou), changxianghe@hotmail.com (C. He).}
}\\[2mm]
\footnotesize $^a$  College of Science, University of Shanghai for Science and Technology,
 Shanghai, 200093, China
}

\date{}
\maketitle {\flushleft\large\bf Abstract}
In the past decades, many scholars concerned
which edge-extremal problems have spectral analogues?
Recently, Wang, Kang and Xue
showed an interesting result on $F$-free graphs
[J. Combin. Theory Ser. B 159 (2023) 20--41].
In this paper, we study the above problem on critical graphs.
Let $P$ be a property defined on a family $\mathbb{G}$ of graphs.
A graph $G$ in $\mathbb{G}$ is said to be $P$-critical,
if it has the property $P$ but $G-e$ no longer has for any edge $e\in E(G)$.
Especially, a graph is minimally $k$-(edge)-connected,
if it is $k$-connected (respectively, $k$-edge connected)
and deleting an arbitrary edge always leaves a graph which is not
$k$-connected (respectively, $k$-edge-connected).
An interesting Max-Min problem asks what is the maximal spectral radius
of an $n$-vertex minimally $k$-(edge)-connected graphs?
In 2019, Chen and Guo [Discrete Math. 342 (2019) 2092--2099] gave the answer for $k=2$.
In 2021, Fan, Goryainov and Lin [Discrete Appl. Math.
305 (2021) 154--163] determined the extremal spectral radius
for minimally $3$-connected graphs.
We obtain some structural properties
of minimally $k$-(edge)-connected graphs.
Furthermore, we solve the above Max-Min problem for $k\geq3$,
which implies that every minimally $k$-(edge)-connected graph with maximal
spectral radius also has maximal number of edges.
Finally, a general problem is posed for further research.

\vspace{0.1cm}
\begin{flushleft}
\textbf{Keywords:} minimally $k$-(edge)-connected graph;
maximal spectral radius; connectivity
\end{flushleft}
\textbf{AMS Classification:} 05C50\ \ 05C75

\section{Introduction}

Perhaps the most basic property a graph may process is that of being connected. At a more refined level, there are various functions that may be said to measure the connectedness of a connected graph \cite{BB-1}.
A graph is said to be \emph{connected} if for every pair of vertices there is a path joining them. Otherwise the graph is
disconnected.
The \emph{connectivity} (or \emph{vertex-connectivity}) $\kappa(G)$ of a graph $G$ is the minimum number of vertices whose
removal results in a disconnected graph or in the trivial graph.  The \emph{edge-connectivity} $\kappa'(G)$ is defined analogously,
only instead of vertices we remove edges.
A graph is \emph{$k$-connected} if its connectivity is at least $k$ and \emph{$k$-edge-connected} if its edge-connectivity is at least
$k$.
It is almost as simple to check that
the minimal degree $\delta(G)$, the edge-connectivity and \emph{vertex-connectivity} satisfy the following inequality:
\begin{equation*}\begin{array}{ll}
\delta(G)\ge\kappa'(G)\ge\kappa(G).
\end{array}
\end{equation*}
A number of extremal problems related to graph connectivity have been studied
in recent years.
One of the most important task for characterization of $k$-connected graphs is to give  certain operation such that they can be produced from simple $k$-connected graphs by repeatedly  applying  this operation \cite{BB-1}.
This goal has accomplished by Tutte \cite{Tutte} for $3$-connected graphs, by Dirac \cite{GD} and Plummer \cite{MP} for $2$-connected graphs and by Slater \cite{Slater} for $4$-connected graphs.

A graph is said to be \emph{minimally $k$-(edge)-connected}
if it is $k$-(edge)-connected but omitting any of edges the resulting graph is no
longer $k$-(edge)-connected.
Clearly, a $k$-(edge)-connected graph whose every edge
is incident with one vertex of degree $k$ is minimally $k$-(edge)-connected,
especially a $k$-regular and $k$-(edge)-connected graph
is minimally $k$-(edge)-connected.

One of the central problems in this area is to determine
the number of vertices of degree $k$ in a minimally $k$-edge-connected graph.
In 1972, Lick \cite{Lick-2} showed that every minimally $k$-edge-connected finite graph has two vertices of degree $k$ (see also Lemma 13 in \cite{Mader-2}),
which of course is best possible.
But for simple graphs, this was
improved in \cite{Mader-3} as follows:
every minimally $k$-edge-connected finite simple graph
has at least $k+1$ vertices of degree $k$.
It was proved in \cite{Mader-4} that for every $k\notin\{1,3\}$
there exists a $c_k>0$ such
that every minimally $k$-edge-connected finite simple graph $G$
has at least $c_k|G|$ vertices of degree $k$.
The value of the constant $c_k$ was improved in \cite{BB-3} and \cite{Cheng-1},
and a rather good
estimate for $c_k$ was given by Cai \cite{Cheng-2}.
In 1995, Mader \cite{Mader-1} further improved the value $c_k$
and gave the best possible linear bound for $k\equiv 3 (\!\!\mod 4)$.

Another interesting problem is to determine the
maximum number of edges in a minimally $k$-(edge)-connected graph.
Mader \cite{Mader-5} proved that $e(G)\leq kn-\binom{k+1}{2}$ for every minimally $k$-connected graph $G$ of order $n$,
and if $n\geq3k-2$ then $e(G)\leq k(n-k)$, where the equality is uniquely
attained by the complete bipartite graph $K_{k,n-k}$
provided that $k\geq 2$ and $n\geq 3k-1$.
Cai \cite{Cheng-0} proved that
$e(G)\leq \lfloor\frac{(n+k)^2}{8}\rfloor$
for every minimally $k$-connected graph $G$ of order $n<3k-2$.
Mader \cite{Mader-5} also
proved that every minimally $k$-edge-connected graph
on $n$ vertices has at most $k(n-k)$ edges provided $n\geq 3k-2$.
The complete bipartite graph $K_{k,n-k}$ shows that this bound is tight.
Dalmazzo \cite{Dalmazzo} proved that every minimally $k$-edge-connected multidigraph
on $n$ vertices has at most $2k(n-1)$ edges.
In 2005, Berg and Jord\'{a}n \cite{Berg-Jordan} showed that if
multiple edges are not allowed then Dalmazzo's bound can be improved to
$2k(n-k)$ for $n$ sufficiently large. In this paper, we first obtain an extremal result for every subgraph of a minimally $k$-(edge)-connected graph.

\begin{thm}\label{thm-1.0}
Let $G$ be a minimally $k$-(edge)-connected graph and
let $H$ be a subgraph of $G$.
Then $e(H)\leq k(|H|-1)$.
Moreover, if $|H|\geq \frac{1}{2}k(k+5)$,
then $e(H)\leq k(|H|-k)$,
where the equality holds if and only if
$H\cong K_{k,|H|-k}$.
\end{thm}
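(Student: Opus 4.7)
The plan is to combine Halin's theorem (in a minimally $k$-(edge)-connected graph $G$, $G - V_k(G)$ is a forest) with Mader's bound $e(G') \leq k(|V(G')|-k)$ for minimally $k$-(edge)-connected $G'$ on at least $3k-1$ vertices (equality iff $G' \cong K_{k,|V(G')|-k}$), and a minimality argument on $G$ to rule out the remaining equality scenarios. Replacing $H$ by the induced subgraph $G[V(H)]$ only enlarges $e(H)$, so I take $H = G[S]$; partition $V(H) = A \cup B$ with $A := V_k(G) \cap V(H)$, $B := V(H) \setminus V_k(G)$, set $a = |A|$, $b = |B|$, $h = a+b$, and let $C := V(G) \setminus V(H)$. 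The forest property gives $e(H[B]) \leq \max(b-1, 0)$, and the identity $2 e(H[A]) + e_H(A,B) + e_G(A,C) = ka$ yields
\[
e(H) = ka - e(H[A]) - e_G(A,C) + e(H[B]) \leq (k-1) a + (h-1)
\]
for $b \geq 1$, and $e(H) \leq kh/2$ for $b = 0$. In either case $e(H) \leq k(h-1)$, proving the first part.

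For the second part, rewrite the Halin bound as $e(H) \leq k(h-k) + (k-1)(k+1-b)$: strict for $b \geq k+2$, and $e(H) \leq kh/2 < k(h-k)$ for $b = 0$ since $h > 2k$. For $1 \leq b \leq k-1$ I would use an additional constraint: each $a \in A$ has $d_B(a) \leq |B| = b$, so $d_A(a) + d_C(a) \geq k - b$, and summing gives $2e(H[A]) + e_G(A,C) \geq a(k-b)$. Combining with the equality condition $e(H[A]) + e_G(A,C) - e(H[B]) = k(k-b)$ read off the identity above, and with $e(H[B]) \leq b-1$, one obtains $(k-b)(h - b - 2k) \leq 2(b-1)$; the hypothesis $h \geq \tfrac12 k(k+5)$ violates this inequality, ruling out equality for every $b \in \{1, \ldots, k-1\}$. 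Thus equality requires $b \in \{k, k+1\}$. When $b = k$ with $e(H[A]) = e(H[B]) = 0$ and $e_G(A,C) = 0$, every $a \in A$ satisfies $N_G(a) = B$, and $H$ is bipartite with parts of sizes $h-k$ and $k$; that is, $H \cong K_{k, h-k}$.

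The main obstacle is to rule out the residual equality subcases, namely $b = k$ with $e(H[A]) + e_G(A,C) = e(H[B]) \geq 1$, and $b = k+1$ with $H[B]$ a spanning tree of $B$ on $k+1$ vertices. In each, $G[B]$ contains an edge $e_0 = b_i b_j$, and I would show via the minimality of $G$ that $G - e_0$ is still $k$-(edge)-connected, contradicting the criticality of $e_0$. The main tool is Menger's theorem applied to the set $A^{\star} := \{a \in A : \{b_i, b_j\} \subseteq N_G(a)\}$: in the $b = k$ subcase the constraints above give $|A^\star| \geq a - 2(e(H[A]) + e_G(A,C)) \geq h - 3k + 2$, and in the $b = k+1$ subcase a pigeonhole argument on the tree edges together with the ``missing neighbor'' map $\phi: A \to B$ bounds $|A^\star|$ from below. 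In both, the hypothesis $h \geq \tfrac12 k(k+5)$ is calibrated precisely so that $|A^\star| \geq k$; the $|A^\star|$ internally vertex-disjoint paths $b_i - a^{\star} - b_j$, supplemented by indirect paths through the remaining vertices of $B$, give $k$ internally vertex-disjoint paths between any pair of vertices in $G - e_0$, so by Menger's theorem $G - e_0$ is $k$-(edge)-connected and $e_0$ is non-critical, the desired contradiction. This closes the equality characterization.
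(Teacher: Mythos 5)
Your unified forest-based approach is genuinely different from the paper's, which proves the edge version via Mader's bound for graphs containing no $(k+1)$-edge-connected subgraph and a repeated edge-cut decomposition, reserving the forest lemma for the vertex version. Your first part, and the cases $b=0$, $1\le b\le k-1$, $b=k$ and $b\ge k+2$ of the second part, are essentially sound (I checked the inequality $(k-b)(h-b-2k)\le 2(b-1)$ and the bound $|A^\star|\ge h-3k+2\ge k$; both work). One caveat: the statement that the vertices of degree $>k$ induce a forest in a minimally $k$-\emph{edge}-connected graph is a separate theorem of Mader (1971), not the vertex-connectivity lemma the paper quotes, so it needs its own citation.

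The genuine gap is the case $b=k+1$. Take the equality configuration in which the tree $H[B]$ is a star with centre $c$ and $\phi(a)=c$ for every $a\in A$, i.e. $N_G(a)=B\setminus\{c\}$. Then for every tree edge $cb_j$ the set $A^\star$ is empty, since no vertex of $A$ is adjacent to $c$; your claim that $h\ge\tfrac12k(k+5)$ is ``calibrated so that $|A^\star|\ge k$'' fails for every tree edge no matter how large $h$ is. Worse, this configuration must not be ruled out: here $c$ has the same neighbourhood inside $H$ as every vertex of $A$, so $H\cong K_{k,h-k}$ with $c$ sitting on the \emph{large} side. Your case analysis tacitly identifies the large side of an extremal $K_{k,h-k}$ with degree-$k$ vertices of $G$, but a large-side vertex of $H$ may have degree $>k$ in $G$ through neighbours in $C$, which is exactly what happens here. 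Moreover, near-star assignments (say every $a\in A$ misses $c$ except one vertex $a_0$ with $\phi(a_0)=b_1$) still have $|A^\star|\le 1$ for every tree edge yet give $e(H)=k(h-k)$ with $H\not\cong K_{k,h-k}$; excluding them requires counting length-three $c$--$b_j$ paths through the other leaves and $A$, which your sketch gestures at but does not supply, and the counting is sensitive to the tree shape and to $\phi$. Two smaller points: in the $b=k$ case you must also exclude $e(H)>k(h-k)$, which is a priori possible when $e(H[B])>e(H[A])+e_G(A,C)$ (your machinery covers it, but the write-up frames everything as equality subcases); and the Menger step should invoke the standard fact that if $G$ is $k$-connected and $G-e_0$ is not, then every minimum separator of $G-e_0$ has $k-1$ vertices and separates the two ends of $e_0$ --- the common neighbours of $b_i$ and $b_j$ do not directly yield $k$ disjoint paths ``between any pair of vertices.''
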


Let $A(G)$ be the adjacency matrix of a graph $G$.
The largest eigenvalue of $A(G)$ is called the
\emph{spectral radius} of $G$, and denoted by $\rho(G)$.
In classic theory of graph spectra,
many scholars are interested in an extremal problem,
that is, what is the maximal spectral radius
of a family $\mathbb{G}$ of graphs,
where graphs in $\mathbb{G}$ have a common property $P$.
A graph is said to be \emph{$P$-saturated},
if it has the property $P$ but adding an edge between
an arbitrary pair of non-adjacent vertices
results in a graph which does not have the property.
It is known that $A(G)$ is a non-negative matrix,
and adding an edge in $G$ always increases the spectral radius
provided that $G$ is connected.
Therefore, most of spectral extremal problems have saturated extremal graphs,
(see for example, \cite{Cioaba-1,Cioaba-3,Nikiforov,L-P,Kang,N-W-K,Zhai-3,Zhai-1,Zhai-2}).
Particularly, we have the following problem.

\begin{prob}\label{prob-1}
What is the maximal spectral radius among all $n$-vertex saturated graphs
with fixed vertex-connectivity or edge-connectivity?
\end{prob}

Ye, Fan and Wang \cite{Y-F-W}
showed that among all graphs of order $n$ with
vertex (edge)-connectivity $k$,
$K(n-1,k)$ has the maximal spectral radius, where $K(n-1,k)$ is
obtained from the complete graph $K_{n-1}$ by adding a new vertex of degree $k$.
Clearly, $K(n-1,k)$ has the same vertex-connectivity,
edge-connectivity and minimum degree.
Ning, Lu and Wang \cite{N-L-W} proved that
for all graphs of order $n$ with minimum degree $\delta$ and edge connectivity $\kappa'<\delta$,
the maximal spectral radius is attained by joining $\kappa'$ edges between
two disjoint complete graphs $K_{\delta+1}$ and $K_{n-\delta-1}$,
and they also determined the unique extremal graph
with minimum degree $\delta$ and edge-connectivity $\kappa'\in\{0,1,2,3\}$.
Very recently,
Fan, Gu and Lin \cite{DDF-1} determined the unique spectral extremal graph over all $n$-vertex graphs with minimum degree $\delta$ and edge connectivity $\kappa'\in\{4,\ldots,\delta-1\}$.

A graph $G$ is said to be \emph{$P$-critical},
if it admits a property $P$ but $G-e$ does not have for any edge $e\in E(G)$.
Clearly, every minimally $k$-(edge)-connected graph is a
connectivity-critical graph.
Comparing with Problem \ref{prob-1},
the following problem also attracts interest of scholars.

\begin{prob}\label{prob-2}
What is the maximal spectral radius among all $n$-vertex critical graphs
with fixed vertex-connectivity or edge-connectivity?
\end{prob}

Obviously,
every minimally $1$-(edge)-connected graph
is a tree.
Furthermore, it is known that the maximal spectral radius
of a tree is attained uniquely by a star (see \cite{Cvetkovi}).
In 2019, Chen and Guo \cite{CXD} showed that $K_{2,n-2}$ attains the maximal spectral radius among all minimally $2$-connected graphs
and minimally $2$-edge-connected graphs, respectively.
Subsequently,
Fan, Goryainov and Lin \cite{DDF} proved that $K_{3,n-3}$
attains the largest spectral radius over all minimally 3-connected graphs.

Now let $k\geq3$ be a fixed integer and $\alpha=\frac{1}{24k(k+1)}$.
Let $X=(x_1,x_2,\ldots,x_n)^T$
be a non-negative eigenvector with respect to $\rho(G)$.
We may assume that $x_{u^*}=\max_{1\leq i\leq n}x_i$ for some $u^*\in V(G)$.
%Inspired by Cioab\u{a}, Desai and Tait \cite{Cioaba-1},
In this paper,
we prove the following result, which implies that
every minimally $k$-(edge)-connected graph
with large spectral radius contains a certain number of vertices of
high degrees.

\begin{thm}\label{thm-1.1}
Let $G$ be an $n$-vertex minimally $k$-(edge)-connected graph,
where $n\geq \frac{18k}{\alpha^2}$.
If $\rho^2(G)\geq k(n-k)$, then $G$ contains a $k$-vertex subset $L$
such that $x_v\geq (1-\frac{1}{2k})x_{u^*}$ and
$d_G(v)\geq (1-\frac{2}{3k})n$ for each vertex $v\in L$.
\end{thm}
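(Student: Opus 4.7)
The plan is to define the candidate set $L := \{v \in V(G) : x_v \geq (1-\frac{1}{2k}) x_{u^*}\}$ and verify two claims: (i) $|L| \geq k$, and (ii) every $v \in L$ automatically satisfies $d_G(v) \geq (1-\frac{2}{3k})n$. The desired $k$-vertex subset is then any $k$-element subset of $L$.

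For (i), I expand the iterated eigenvalue equation at $u^*$,
\[
\rho^2 x_{u^*} = \sum_{w \in V(G)} (A^2)_{u^*,w}\, x_w,
\]
and split the right-hand side according to whether $w \in L$. Using $x_w \leq x_{u^*}$ for $w \in L$ and the strict bound $x_w < (1-\frac{1}{2k}) x_{u^*}$ for $w \notin L$, together with the identity $\sum_w (A^2)_{u^*,w} = \sum_{v \in N(u^*)} d(v)$, one obtains
\[
\rho^2 \leq \left(1-\tfrac{1}{2k}\right) \sum_{v \in N(u^*)} d(v) + \tfrac{1}{2k} \sum_{w \in L} (A^2)_{u^*,w}.
\]
Applying Theorem \ref{thm-1.0} to $N[u^*]$ gives $e(N(u^*)) \leq (k-1) d(u^*)$, and applying it to $G$ itself (possible since $n \geq 18k/\alpha^2 \geq \frac{k(k+5)}{2}$) gives $e(G) \leq k(n-k)$. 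Together these bound $\sum_{v \in N(u^*)} d(v) \leq 2(k-1) d(u^*) + k(n-k)$, while the trivial estimate $(A^2)_{u^*,w} \leq d(u^*)$ yields $\sum_{w \in L}(A^2)_{u^*,w} \leq |L|\,d(u^*)$. Assuming $|L| \leq k-1$ for contradiction and combining with $\rho^2 \geq k(n-k)$ first forces a lower bound on $d(u^*)$ of order $n/k$; feeding this bound back into the argument then tightens the estimate until it becomes incompatible with the hypothesis $n \geq 18k/\alpha^2$.

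For (ii), fix $v \in L$ and replay the expansion at $v$:
\[
\rho^2 x_v = d(v) x_v + \sum_{w \neq v} |N(v) \cap N(w)|\, x_w.
\]
Splitting the sum over $L$ and $V \setminus L$ and using the lower bound $|N(u) \cap (V\setminus L)| \geq d(u) - |L|$ for each $u \in N(v)$, together with the upper bound $\sum_{u \in N(v)} d(u) \leq 2(k-1) d(v) + k(n-k)$ from Theorem \ref{thm-1.0}, the hypothesis $x_v \geq (1-\frac{1}{2k}) x_{u^*}$ combines with $\rho^2 \geq k(n-k)$ to give the required lower bound on $d(v)$. The main obstacle is quantitative rather than conceptual: the margins between $\rho^2$ and $k(n-k)$, between $x_v$ and $x_{u^*}$, and between $d(v)$ and $(1-\frac{2}{3k})n$ are all narrow, and each step must be bounded using the sharpest available edge estimate from Theorem \ref{thm-1.0}. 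The parameter $\alpha = \frac{1}{24k(k+1)}$ together with the threshold $n \geq 18k/\alpha^2$ is calibrated precisely so that the accumulated error terms remain controllable throughout the two-stage argument, which is what ultimately closes the proof in both parts.
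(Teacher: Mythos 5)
There is a genuine gap, and it is the same one in both halves of your argument: the single threshold at $(1-\frac{1}{2k})x_{u^*}$ is far too coarse. For $w\notin L$ you only know $x_w<(1-\frac{1}{2k})x_{u^*}$, so after splitting the sum the dominant quantity $\sum_{u\in N(v)}d(u)\le 2(k-1)d(v)+k(n-k)$ enters with coefficient $1-\frac{1}{2k}$. But $(1-\frac{1}{2k})\,k(n-k)$ already equals $k(n-k)-\frac{n-k}{2}$, and the remaining terms you must add ($d(v)x_v$, the contribution of $L$, and $(1-\frac{1}{2k})\cdot 2(k-1)d(v)$) are all nonnegative and can exceed $\frac{n-k}{2}$ by an order of magnitude. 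Concretely, in part (ii) the inequality you arrive at reduces to $0\le c\,d(v)$ for a positive constant $c$, which holds for every $d(v)$ and yields no lower bound at all; in part (i), assuming $|L|\le k-1$ only forces $d(u^*)\gtrsim n/(4k)$, and since your upper bound on $\rho^2$ is \emph{increasing} in $d(u^*)$, "feeding this back" loosens the estimate rather than tightening it — the inequalities are mutually consistent for all $d(u^*)$ between $\Omega(n/k)$ and $n$, so no contradiction with $n\ge 18k/\alpha^2$ ever appears.

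What is missing is the multi-scale structure that the paper builds before touching Theorem \ref{thm-1.1}. The paper works with three nested level sets $L_\alpha\supseteq L_\beta\supseteq L_\gamma$ with $\alpha\le\frac{1}{24k(k+1)}$ and $\beta\le\frac{1}{6k^2}$, and uses Theorem \ref{thm-1.0} to prove $|L_\alpha|<\sqrt{4kn}$ and then $|L_\beta|<\frac{12k}{\alpha}$ (a constant independent of $n$). Because $L_\beta$ has bounded size, the vertices with non-negligible eigenvector entries contribute at most $(k-1)d(v)+O(1)$ to $\rho^2x_v$, while everything outside $L_\beta$ is damped by the tiny factor $\beta$; this is what makes the degree bound of Lemma \ref{lem-3.3} provable. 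Finally, the gap lemma $L_{1/(2k)}=L_{1-1/(2k)}$ (Lemma \ref{lem-3.4}) shows there are \emph{no} vertices with $\frac{1}{2k}x_{u^*}\le x_v<(1-\frac{1}{2k})x_{u^*}$, so that the complement of $L$ can legitimately be damped by $\frac{1}{2k}$ rather than by $1-\frac{1}{2k}$ — exactly the factor your computation lacks. Your overall architecture (define $L$ by eigenvector entries, show it is large, show its vertices have large degree) matches the paper's, but without the intermediate level sets, their cardinality bounds, and the gap lemma, neither quantitative step closes.
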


The main result of the paper is the following Max-Min theorem,
which implies that every minimally $k$-(edge)-connected graph with maximal
spectral radius also has maximal number of edges.

\begin{thm}\label{thm-1.2}
For $n\geq \frac{18k}{\alpha^2}$,
the maximal spectral radius of an $n$-vertex minimally $k$-(edge)-connected graph
is attained uniquely by the complete bipartite graph $K_{k,n-k}$.
\end{thm}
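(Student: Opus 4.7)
The plan is to combine Theorems~\ref{thm-1.0} and~\ref{thm-1.1} with the minimality hypothesis to force any extremal $G$ onto $K_{k,n-k}$. First, $K_{k,n-k}$ is itself minimally $k$-(edge)-connected (every edge meets the side of size $k$, whose vertices all have degree $k$) and satisfies $\rho(K_{k,n-k})=\sqrt{k(n-k)}$; hence any extremal $G$ has $\rho^2(G)\ge k(n-k)$. Theorem~\ref{thm-1.1} then produces a set $L\subset V(G)$ of size $k$ with $x_v\ge(1-\tfrac{1}{2k})x_{u^*}$ and $d_G(v)\ge(1-\tfrac{2}{3k})n$ for every $v\in L$; write $R=V(G)\setminus L$.

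The first structural step is that $L$ is independent. An edge $v_1v_2$ inside $L$ would, by minimality, lie in some minimum $k$-(edge)-cut, yielding a partition $V_1\ni v_1$, $V_2\ni v_2$ (with an extra separator of size $k-1$ in the vertex-connectivity case) across which each $v_i$ has at most $k$ neighbors. Hence $|V_i|\ge d(v_i)-k+1\ge(1-\tfrac{2}{3k})n-k+1$, and summing over $i=1,2$ contradicts $|V_1|+|V_2|\le n$ whenever $n\ge 18k/\alpha^2$.

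Now let $S=\{u\in R:u\sim v\text{ for every }v\in L\}$. Since each $v\in L$ has at most $2n/(3k)-k$ non-neighbors in $R$, one has $|R\setminus S|\le 2n/3-k^2$ and hence $|S|\ge n/3$, well above the threshold $\tfrac{1}{2}k(k+5)$ needed by Theorem~\ref{thm-1.0}. Applied to $H:=G[L\cup S]$, that theorem gives $e(H)\le k|S|$; but $L$ is independent and completely joined to $S$, so $e(H)=k|S|+e(S)$, forcing $e(S)=0$. The delicate step is to promote this to $R=S$. Suppose for contradiction $u_0\in R\setminus S$, so $u_0\not\sim v_0$ for some $v_0\in L$; applying Theorem~\ref{thm-1.0} to $G[L\cup S\cup\{u_0\}]$ first yields $d_L(u_0)+d_S(u_0)\le k$. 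I would then combine the eigenvector equation at $u_0$---which, together with $d(u_0)\ge k$, severely restricts $x_{u_0}$ since $u_0$'s edges must lie mostly inside $R\setminus S$---with a cut analysis analogous to the one used to show $L$ is independent, applied to any $R$-edge incident to $u_0$, invoking the classical result of Mader that every cycle in a minimally $k$-edge-connected graph contains a vertex of degree $k$. Aggregating these local bounds over all $u\in R\setminus S$ and combining with $e(G)\le k(n-k)$ should, for $n\ge 18k/\alpha^2$, contradict $\rho^2(G)\ge k(n-k)$. I expect this aggregation to be the main technical obstacle.

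Once $R=S$, the graph $G$ is bipartite with parts $L,R$ and $e(G)=k(n-k)$. The bipartite inequality $\rho^2(G)\le e(G)$ together with $\rho^2(G)\ge k(n-k)$ forces $\rho^2(G)=e(G)=k(n-k)$, and the equality case of Theorem~\ref{thm-1.0} then yields $G\cong K_{k,n-k}$, proving uniqueness of the extremal graph.
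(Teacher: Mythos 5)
Your opening moves are sound and run parallel to the paper's: the reduction to $\rho^2(G)\ge k(n-k)$, the invocation of Theorem~\ref{thm-1.1}, the definition of the common neighbourhood $S$ of $L$ with $|S|\ge n/3$, and the conclusion that $G[L\cup S]$ is complete bipartite (you get this from independence of $L$ via a cut argument plus the edge count of Theorem~\ref{thm-1.0}; the paper gets it more directly by noting that $G[L\cup S]$ contains a spanning $K_{k,|S|}$, hence is $k$-(edge)-connected, hence by Lemma~\ref{lem-2.1} is minimally so and must equal $K_{k,|S|}$). Your observation that Theorem~\ref{thm-1.0} applied to $G[L\cup S\cup\{u_0\}]$ forces $d_L(u_0)+d_S(u_0)\le k$ for $u_0\in R\setminus S$ is also correct.

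The genuine gap is exactly where you flag it: you have no working argument that $R\setminus S=\varnothing$, and the route you sketch is unlikely to close. First, a small but telling error: from $d_L(u_0)+d_S(u_0)\le k$ and $d(u_0)\ge k$ you cannot conclude that $u_0$'s edges ``lie mostly inside $R\setminus S$''; $u_0$ may have exactly $k$ neighbours, all in $L\cup S$. More seriously, the strategy of aggregating local bounds to contradict $\rho^2(G)\ge k(n-k)$ cannot succeed by crude counting, because that inequality is \emph{tight} for the extremal graph: any estimate of the form $\rho^2\le kn+O(n/k)$ (which is what eigenvector-equation counting with $|L|=k$ delivers, as in the last step of the proof of Theorem~\ref{thm-1.1}) leaves no contradiction. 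What is needed, and what the paper supplies, is a \emph{comparison} argument rather than an absolute bound: assuming $U=R\setminus S\ne\varnothing$, one greedily extracts vertices $u_0,u_1,\dots$ of $U$ with $d_{G_i}(u_i)\le 2k$ in the successively shrunken graphs (possible because the edges meeting $U_i$ number at most $k|U_i|$, by $e(G)\le k(n-k)$ and $e(L,S)=k|S|$), then rewires each $u_i$ to be joined to all of $L$ and to nothing else. The Rayleigh quotient with the old Perron vector strictly increases, because each gained neighbour in $L$ carries weight at least $(1-\frac{1}{2k})x_{u^*}$ while each lost neighbour outside $L$ carries weight below $\frac{1}{2k}x_{u^*}$ and at most $2k$ edges are lost per vertex. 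The resulting graph is $K_{k,n-k}$ itself, so $\rho(G^*)<\rho(K_{k,n-k})$, contradicting maximality of $G^*$ among minimally $k$-(edge)-connected graphs (note $K_{k,n-k}$ is in the class, so no verification that the intermediate rewired graph stays in the class is needed). Without this rewiring step, or some substitute of comparable strength, your proof does not go through; the appeal to Mader's theorem on cycles through degree-$k$ vertices is not developed enough to judge, and I see no way it yields the required contradiction on its own.
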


Finally, we present the following problem.
\begin{prob}\label{prob-3}
Consider a given property $P$.
Whether an edge-extremal problem on $P$-critical graphs
has a spectral analogue?
\end{prob}

The rest of the paper is organized as follows.
In Section \ref{2}, we give some
structural properties of a minimally $k$-(edge)-connected graph
as well as the proof of Theorem \ref{thm-1.0}. In Section \ref{3},
we use Theorem \ref{thm-1.0}
to show Theorems \ref{thm-1.1} and \ref{thm-1.2}.

\section{Structural properties}\label{2}

Let $G$ be a graph with vertex set $V(G)$ and edge set $E(G)$.
We write $|G|$ for the number of vertices
and $e(G)$ the number of edges in $G$.
For a vertex $v \in V(G)$, let $N_G(v)$ be the neighborhood of $v$.
For $S\subseteq V(G)$, we denote $N_S(v)=N(v)\cap S$ and $d_S(v)=|N_S(v)|$.
The subgraph of $G$ induced by $S$ and $V(G)\setminus S$
are denoted by $G[S]$ and $G-S$, respectively.
Let $e_G(S)$ be the number of edges within $S$, and let
$e_G(S,V(G)\setminus S)$ be the number of edges
between $S$ and $V(G)\setminus S$.
All the subscripts defined here
will be omitted if it is clear from the context.
We start with the following lemma.

\begin{lem}\label{lem-2.1}
Every $k$-(edge)-connected subgraph of a minimally
$k$-(edge)-connected graph is minimally $k$-(edge)-connected.
\end{lem}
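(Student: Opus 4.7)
The plan is to verify the minimality condition directly: given any $e \in E(H)$, I aim to produce a small cut in $H-e$ by restricting to $V(H)$ a cut that already witnesses the failure of $k$-(edge)-connectivity of $G-e$. Such a witness is available in $G$ by minimality, and the crucial observation will be that these witnessing cuts must involve both endpoints of $e$ (because $G$ itself is still $k$-(edge)-connected before deletion), so the restriction to $V(H)$ remains non-trivial.

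For the edge-connectivity case, I would fix $e = uv \in E(H)$ and use minimality of $G$ to obtain a partition $V(G) = A \cup B$ with $|E_{G-e}(A,B)| \le k-1$. Since $G$ is $k$-edge-connected, $|E_G(A,B)| \ge k$, which forces $e$ into the cut and places $u,v$ on opposite sides. Putting $A_H = A \cap V(H)$ and $B_H = B \cap V(H)$, both parts contain an endpoint of $e$ and so are non-empty, while $|E_{H-e}(A_H,B_H)| \le |E_{G-e}(A,B)| \le k-1$, which certifies that $H-e$ is not $k$-edge-connected.

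For the vertex-connectivity case, I would likewise fix $e = uv \in E(H)$ and use minimality of $G$ to obtain a set $S \subseteq V(G)$ with $|S| \le k-1$ such that $G-e-S$ is disconnected. Since $G-S$ must still be connected by $k$-connectivity of $G$, putting $e$ back has to reconnect everything, which forces $u,v \notin S$ and places them in distinct components of $G-e-S$. Setting $S_H = S \cap V(H)$, any $u$-$v$ path in $H-e-S_H$ would also be a path in $G-e-S$ between those two components, a contradiction. Hence $S_H$ is a vertex cut of $H-e$ of size at most $k-1$ that separates $u$ from $v$, and $H-e$ is not $k$-connected.

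The only sanity check I need is that $H$ contains enough vertices for $k$-connectivity to be meaningful, but $|V(H)| \ge k+1$ follows immediately from the hypothesis that $H$ is already $k$-(edge)-connected. I do not expect a genuine obstacle here: the whole argument is a careful restriction of a witnessing cut from $G$ to $H$, and the forcing of both endpoints of $e$ into the cut is automatic from minimality together with $k$-(edge)-connectivity of $G$.
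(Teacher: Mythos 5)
Your proof is correct. For the edge-connectivity case it is essentially the paper's argument: the paper phrases it as a contradiction (assuming $H-e$ stays $k$-edge-connected), but the mechanism is identical --- take a cut of $G-e$ with at most $k-1$ crossing edges, observe that $e$ must cross it because $G$ itself is $k$-edge-connected, and restrict the partition to $V(H)$, where both sides are nonempty since they contain the endpoints of $e$. For the vertex-connectivity case the paper gives no proof at all, deferring to an exercise in Bollob\'as's book; your argument for that case is complete and correct: since $\lvert S\rvert\le k-1<\kappa(G)$, the graph $G-S$ is connected, which forces $u,v\notin S$ and puts them in distinct components of $G-e-S$, and any $u$--$v$ path in $H-e-S_H$ would live in $G-e-S$, so $S_H$ separates $u$ from $v$ in $H-e$. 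So you have supplied, via the same restriction-of-a-witnessing-cut idea, the half of the proof the paper omits.
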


\begin{proof}
We first prove that for every subgraph of a minimally
$k$-edge-connected graph, if it is $k$-edge-connected then
it is minimally $k$-edge-connected.
Suppose to the contrary that $H$ is a $k$-edge-connected subgraph of $G$ but it is not minimally $k$-edge-connected. Then there exists an edge, say $u_1u_2$, of $H$ such that $H-u_1u_2$ is also $k$-edge-connected.

Notice that $G$ is a minimally $k$-edge-connected graph.
Hence, $G-u_1u_2$ is $(k-1)$-edge-connected. Thus,
there exists a partition $V(G)=V_1\cup V_2$ such that $u_1\in V_1$, $u_2\in V_2$ and $e(V_1, V_2)=k$.
Now, let $V_i(H)=V(H)\cap V_i$ for $i\in\{1,2\}$.
Clearly,
\begin{equation*}\begin{array}{ll}
e\big(V_1(H),V_2(H)\big)\leq e\big(V_1,V_2\big)=k,
\end{array}
\end{equation*}
and thus $e\big(V_1(H),V_2(H)\big)\leq k-1$ in $H-u_1u_2$,
which contradicts the fact that
$H-u_1u_2$ is $k$-edge-connected.
Therefore, the result follows.

The vertex-connected case of the lemma is
an exercise of Chapter one in \cite{BB-1}.
Hence, we omit its proof here.
\end{proof}

Next, we give the maximal number of edges in every subgraph
of a minimally $k$-edge-connected graph.
Before proceeding, we need two more lemmas due to Mader \cite{Mader-3}.

\begin{lem}[\cite{Mader-3}]\label{lem-2.3}
Let $G$ be a graph of order $n\geq k$.
If $G$ does not contain any $(k+1)$-edge-connected subgraph,
then
\begin{equation*}\begin{array}{ll}
e(G)\leq k(n-k)+\binom{k}{2}.
\end{array}
\end{equation*}
Furthermore, this bound is best possible.
\end{lem}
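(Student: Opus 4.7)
The plan is to proceed by induction on $n$. The base case $n=k$ is immediate, since then $e(G)\le\binom{k}{2}=k(n-k)+\binom{k}{2}$. For the inductive step $n\ge k+1$, I split on the minimum degree of $G$. If there exists a vertex $v$ with $d(v)\le k$, then $G-v$ is a graph on $n-1\ge k$ vertices which still contains no $(k+1)$-edge-connected subgraph (any such subgraph of $G-v$ would be one of $G$), so by induction $e(G-v)\le k(n-1-k)+\binom{k}{2}$, and adding back the at most $k$ edges incident with $v$ gives $e(G)\le k(n-k)+\binom{k}{2}$, as required.

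The substantive case is $\delta(G)\ge k+1$. Here $G$ itself cannot be $(k+1)$-edge-connected (otherwise it would be a forbidden subgraph of itself), so there is a partition $V(G)=A\cup B$ with $e(A,B)\le k$ and $|A|,|B|\ge 1$. I would first force $|A|,|B|\ge k+1$ by a short counting argument: each $v\in A$ has at most $|A|-1$ neighbors in $A$, hence at least $\delta(G)-(|A|-1)\ge k+2-|A|$ neighbors in $B$, and summing yields $e(A,B)\ge|A|(k+2-|A|)$. The quadratic $|A|(k+2-|A|)$ attains its minimum value $k+1$ over $|A|\in\{1,\dots,k\}$, which already exceeds $k$; therefore $|A|\ge k+1$, and symmetrically $|B|\ge k+1$. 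Applying the induction hypothesis to $G[A]$ and $G[B]$ (which clearly inherit the forbidden-subgraph condition) then gives
\begin{equation*}
e(G)\;\le\; k(|A|-k)+k(|B|-k)+2\binom{k}{2}+k \;=\; k(n-k),
\end{equation*}
which is strictly smaller than the target $k(n-k)+\binom{k}{2}$.

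For the sharpness claim, I would exhibit the graph $G^{*}$ obtained from a clique $K_k$ by adding $n-k$ new vertices, each joined to every vertex of the $K_k$. This has $\binom{k}{2}+k(n-k)$ edges; since each appended vertex has degree exactly $k$, any $(k+1)$-edge-connected subgraph must avoid the appended vertices and hence lie inside the $K_k$, which has only $k$ vertices and cannot be $(k+1)$-edge-connected. The main technical obstacle is closing the high-minimum-degree case: one needs the quantitative estimate forcing both sides of the minimum edge cut to have size at least $k+1$, because otherwise the induction hypothesis is not available on both pieces and the two-part bookkeeping collapses; fortunately the minimum-of-quadratic computation above is robust and handles this cleanly.
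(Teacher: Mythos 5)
The paper does not prove this lemma; it is imported verbatim from Mader's 1971 paper, so there is no internal proof to compare against. Your argument is a correct, self-contained proof, and it is essentially the classical one: induct on $n$, delete a vertex of degree at most $k$ if one exists, and otherwise take an edge cut of size at most $k$ (which exists because $G$ itself cannot be $(k+1)$-edge-connected) and induct on both sides. The one step that genuinely needs care — guaranteeing that both sides of the cut have at least $k+1$ vertices so the induction hypothesis applies — is handled correctly by your degree count $e(A,B)\ge |A|(k+2-|A|)\ge k+1$ for $1\le |A|\le k$, and the final bookkeeping $k(|A|-k)+k(|B|-k)+2\binom{k}{2}+k=k(n-k)$ is right. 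The extremal example $K_k+\overline{K_{n-k}}$ and the degree argument showing it has no $(k+1)$-edge-connected subgraph are also correct, so the sharpness claim is established as well.
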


\begin{lem}[\cite{Mader-3}]\label{lem-2.4}
Let $G$ be a minimally $k$-edge-connected graph of order $n\geq3k$.
Then
\begin{equation*}\begin{array}{ll}
e(G)\leq k(n-k),
\end{array}
\end{equation*}
with equality if and only if $G\cong K_{k,n-k}$.
\end{lem}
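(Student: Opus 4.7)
The plan is to proceed by induction on $n$, with the base case $n = 3k$ handled by a refinement of Lemma~\ref{lem-2.3} and the inductive step driven by removing a vertex of minimum degree. The key structural input is Mader's earlier theorem (cited in the introduction) that every minimally $k$-edge-connected simple graph contains at least $k+1$ vertices of degree exactly $k$. Fix such a vertex $v$ and consider $G' := G - v$, so that $e(G) = e(G') + k$.

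The easy case of the inductive step is when $G'$ is still $k$-edge-connected. Then Lemma~\ref{lem-2.1} yields that $G'$ is itself minimally $k$-edge-connected on $n-1$ vertices, and the induction hypothesis gives $e(G') \le k(n-1-k)$, hence $e(G) \le k(n-k)$. The hard case is when $G'$ is not $k$-edge-connected. Take a minimum cut $(A,B)$ of $G'$ of some size $t \le k-1$; comparing with the cuts $(A \cup \{v\}, B)$ and $(A, B \cup \{v\})$ of $G$ together with $d_A(v)+d_B(v)=k$ forces $t \ge \lceil k/2 \rceil$ and $d_A(v), d_B(v) \ge 1$. I would then invoke Mader's splitting-off theorem at $v$ to pair neighbors of $v$ across the cut, replace each pair $\{vu, vw\}$ by a single edge $uw$, and delete $v$; the resulting graph on $n-1$ vertices is $k$-edge-connected with at most $e(G) - \lceil k/2 \rceil$ edges. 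Passing to a minimally $k$-edge-connected spanning subgraph and applying the induction hypothesis then gives $e(G) < k(n-k)$ strictly.

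For the equality characterization, the strict inequality in the hard case forces the easy case at every level of the induction, so $G - v \cong K_{k, n-1-k}$ by the inductive equality statement. The $k$ neighbors of $v$ must lie on the $k$-vertex side of $K_{k, n-1-k}$: if any neighbor lay on the $(n-1-k)$-vertex side, it would have degree $k+1$ in $G$, and a short check shows that some edge incident to such a vertex then becomes non-critical, contradicting minimality. Therefore $v$ joins the independent side and $G \cong K_{k, n-k}$. The main obstacle is the splitting-off step: invoking Mader's lifting theorem rigorously, handling odd $k$ where a single unsplit edge remains at $v$ (requiring a secondary modification), and verifying that $k$-edge-connectivity survives the operation all demand care. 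A secondary difficulty is tightening the base case $n = 3k$, since the direct bound from Lemma~\ref{lem-2.3} is $k(n-k)+\binom{k}{2}$, off by a constant that must be absorbed using the minimality assumption.
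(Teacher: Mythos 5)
This lemma is quoted by the paper from Mader's 1971 article \cite{Mader-3} and is not proved in the text, so there is no internal argument to compare yours against; your sketch has to stand on its own, and as written it has two genuine gaps. The first is the base case $n=3k$. You correctly observe that Lemma~\ref{lem-2.3} only gives $e(G)\le k(n-k)+\binom{k}{2}$ and that the surplus $\binom{k}{2}$ ``must be absorbed using the minimality assumption,'' but you offer no mechanism for doing so. For an induction on $n$ anchored at $n=3k$ this is not a loose end: the inequality genuinely fails below roughly $3k$ (which is why the hypothesis $n\ge 3k$ is there), so the base case carries essentially the full difficulty of the theorem and cannot be waved through.

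The second gap is the hard case of the inductive step. After splitting off at $v$ you obtain a $k$-edge-connected graph $G''$ on $n-1$ vertices with $e(G'')=e(G)-\lceil k/2\rceil$, but $G''$ is in general neither simple (lifting $vu,vw$ to $uw$ can create a parallel edge, taking you outside the class where the induction hypothesis and the extremal graph $K_{k,|H|-k}$ live) nor minimally $k$-edge-connected. Passing to a minimally $k$-edge-connected spanning subgraph $H\subseteq G''$ gives $e(H)\le k(n-1-k)$, which is an upper bound on the wrong quantity: it says nothing about $e(G'')$ unless you also bound $e(G'')-e(H)$, i.e.\ how many edges of $G''$ fail to be critical. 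Even under the most favourable accounting (every old edge of $G$ stays critical in $G''$, so only the $\lfloor k/2\rfloor$ lifted edges can be discarded) you recover only $e(G)\le k(n-1-k)+\lfloor k/2\rfloor+\lceil k/2\rceil=k(n-k)$, with no strictness. Since your equality characterization rests entirely on the claim that the hard case yields a \emph{strict} inequality, that characterization is unsupported as well. Added to this are the acknowledged but unresolved issues that Mader's splitting-off theorem excludes degree $3$ (so $k=3$ needs separate treatment) and that odd $k$ leaves an unpaired edge at $v$. The skeleton (induction via a degree-$k$ vertex, with Lemma~\ref{lem-2.1} handling the case where $G-v$ stays $k$-edge-connected) is reasonable, but the base case and the non-$k$-edge-connected case both need real arguments before this is a proof.
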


\begin{thm}\label{thm-2.1}
Let $G$ be a minimally $k$-edge-connected graph and
let $H$ be a subgraph of $G$.
Then $e(H)\leq k(|H|-1)$.
Moreover, if $|H|\geq\frac12k(k+5)$,
then $e(H)\leq k(|H|-k)$,
where the equality holds if and only if
$H\cong K_{k,|H|-k}$.
\end{thm}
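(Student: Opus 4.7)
The plan is to establish the two bounds in sequence, combining Lemmas \ref{lem-2.1}, \ref{lem-2.3}, and \ref{lem-2.4} with a recursive edge-cut decomposition of $H$.

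For the first bound $e(H)\le k(|H|-1)$, I would use Lemma \ref{lem-2.1} to rule out any $(k+1)$-edge-connected subgraph in $H$: such a subgraph would have to be minimally $k$-edge-connected in $G$, but a $(k+1)$-edge-connected graph survives the deletion of any single edge as a $k$-edge-connected graph, contradicting minimality. Lemma \ref{lem-2.3} then yields $e(H)\le k(|H|-k)+\binom{k}{2}$ whenever $|H|\ge k$, and an elementary computation shows $k(|H|-k)+\binom{k}{2}\le k(|H|-1)$ for all $k\ge 1$. The residual range $|H|<k$ is trivial from $e(H)\le\binom{|H|}{2}$.

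For the refined bound I would split on whether $H$ itself is $k$-edge-connected. If it is, Lemma \ref{lem-2.1} upgrades $H$ to a minimally $k$-edge-connected graph, and since $|H|\ge\frac{1}{2}k(k+5)\ge 3k$, Lemma \ref{lem-2.4} applies directly, giving $e(H)\le k(|H|-k)$ with equality only at $H\cong K_{k,|H|-k}$. Otherwise I would decompose $H$ recursively along minimum edge cuts of size at most $k-1$, stopping when each remaining piece is either a singleton or a $k$-edge-connected (hence minimally $k$-edge-connected, by Lemma \ref{lem-2.1}) subgraph. This produces a partition $V(H)=V_1\cup\cdots\cup V_p$ supported by a binary tree with $p-1$ internal cuts, each of size at most $k-1$. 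Bounding each non-singleton piece by Lemma \ref{lem-2.4} when $|V_i|\ge 3k$ and by Lemma \ref{lem-2.3} when $k+1\le|V_i|<3k$, and adding in the cut edges, gives an upper bound of the shape $k(|H|-s)-k^2 p_I+p_b\binom{k}{2}+(p-1)(k-1)$, where $s$ counts singletons, $p_I$ counts $k$-edge-connected pieces, and $p_b$ counts medium pieces.

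Comparing with $k(|H|-k)$, the task reduces to verifying $p_b\binom{k}{2}\le s+(p_I-1)(k^2-k+1)$ with at least one unit of slack. For $p_I\ge 2$ this follows from elementary algebra using $p_b\le p_I$. For $p_I=0$ the graph $H$ contains no $k$-edge-connected subgraph, and I would invoke Mader's theorem (the statement underlying Lemma \ref{lem-2.3}) with parameter $k-1$ in place of $k$ to get $e(H)\le(k-1)(|H|-k+1)+\binom{k-1}{2}<k(|H|-k)$ as soon as $|H|>\frac{1}{2}k(k+1)$, automatic from the hypothesis. For $p_I=1$ with a large piece, $s\ge 1$ (otherwise $H$ would itself be $k$-edge-connected), and the inequality is immediate. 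The delicate case and the main obstacle is $p_I=1$ with a medium piece, where one needs $s\ge\binom{k}{2}+1$; the available estimate $s\ge|H|-(3k-1)$ gives exactly $s\ge\binom{k}{2}+1$ under the hypothesis $|H|\ge\frac{1}{2}k(k+5)$, which is where the threshold constant of the theorem is pinned. Because every non-$k$-edge-connected subcase yields strict inequality, equality $e(H)=k(|H|-k)$ can only be achieved in the $k$-edge-connected case, uniquely by $H\cong K_{k,|H|-k}$ via Lemma \ref{lem-2.4}.
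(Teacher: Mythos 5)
Your argument is correct and follows essentially the same route as the paper's proof: ruling out $(k+1)$-edge-connected subgraphs via Lemma \ref{lem-2.1} to apply Lemma \ref{lem-2.3}, then recursively splitting $H$ along edge cuts of size at most $k-1$ into singletons and $k$-edge-connected pieces and running the same case analysis (your cases $p_I\ge 2$, $p_I=1$ with a large or medium piece, and $p_I=0$ correspond exactly to the paper's $|S_2|\ge 2$, $|S_2|=1$ with $|H_0|\ge 3k$ or $|H_0|<3k$, and Case 1). Your identification of where the threshold $\tfrac12 k(k+5)$ is pinned ($s\ge\binom{k}{2}+1$ in the single-medium-piece case) matches the paper's computation $|S_1|>\tfrac12 k(k-1)$.
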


\begin{proof}
Firstly, we will show that $e(H)\leq k(|H|-1)$.
If $|H|<k$,
then $e(H)\leq\frac12|H|(|H|-1)\leq k(|H|-1)$, as desired.
Now assume that $|H|\geq k$.
It suffices to show $e(H)\leq k(|H|-\frac{k+1}{2})$.
By Lemma \ref{lem-2.1}, every $k$-edge-connected subgraph of $G$
is minimally $k$-edge-connected, and thus has edge-connectivity $k$.
Hence, $G$ contains no $(k+1)$-edge-connected subgraphs.
By Lemma \ref{lem-2.3}, we have
$e(H)\leq k(|H|-k)+\binom{k}{2}=k(|H|-\frac{k+1}{2})$,
as required.

In the following, we prove that
$e(H)\leq k(|H|-k)$ for $|H|\geq\frac12k(k+5)$.
The proof should be distinguished into two cases.

\textbf{Case 1: $H$ contains no $k$-edge-connected subgraphs.}
By Lemma \ref{lem-2.3}, we know that
$e(H)\leq (k-1)(|H|-\frac{k}{2})$.
Note that $|H|\geq\frac12k(k+5)$.
It is easy to see that
$(k-1)(|H|-\frac{k}{2})<k(|H|-k)$,
and the result follows.

\textbf{Case 2: $H$ contains $k$-edge-connected subgraphs.}
Let $H_0$ be a maximal $k$-edge-connected subgraph of $H$.
Then $H_0$ is a vertex-induced subgraph with $|H_0|\geq k+1$.
If $H=H_0$, then by Lemma \ref{lem-2.1},
$H$ is minimally $k$-edge-connected.
Since $|H|\geq\frac12k(k+5)\geq3k$, by Lemma \ref{lem-2.4}
we have $e(H)\leq k(|H|-k),$
with equality if and only if $H\cong K_{k,|H|-k}$.

Now we may assume that $H_0$ is a proper induced subgraph of $H$.
Then $\kappa'(H)\leq k-1$, and thus
we can find a partition $V(H)=V_0\cup V_1$ such that
$e(H)\leq e(V_0)+e(V_1)+(k-1).$
One can observe that $H_0$ is a subgraph of $H[V_0]$ or $H[V_1]$
(otherwise, write $U_i=V(H_0)\cap V_i$ for $i\in\{0,1\}$,
then $e(U_0,U_1)\geq k$ as $H_0$ is $k$-edge-connected,
consequently, $e(V_0,V_1)\geq k$, a contradiction).
For $i\in\{0,1\}$, if $\kappa'(H[V_i])\leq k-1$ and $|V_i|\geq2$, then
we can find a partition $V_i=V_i'\cup V_i''$ such that
$e(V_i)\leq e(V_i')+e(V_i'')+(k-1)$.
Similarly, every $k$-edge-connected subgraph of $H[V_i]$
can only be a subgraph of $H[V_i']$ or $H[V_i'']$.

By a series of above iterate operations (say $s$ steps),
we can obtain a partition $V(H)=\cup_{i=0}^sV_i$
satisfying that
\begin{equation}\label{eq-1.1}
\begin{array}{ll}
e(H)\leq\sum\limits_{i=0}^se(V_i)+(k-1)s
\end{array}
\end{equation}
and every $H[V_i]$ is either $k$-edge-connected or a single vertex.
Recall that $G$ contains no $(k+1)$-edge-connected subgraphs.
If $H[V_i]$ is $k$-edge-connected,
then $|V_i|\geq k+1$ and
$e(V_i)\leq k(|V_i|-\frac{k+1}{2})$ by Lemma \ref{lem-2.3}.
Let $S_1=\{i\mid |V_i|=1\}$ and $S_2=\{0,\ldots,s\}\setminus S_1$.
Then $s=|S_1|+|S_2|-1$ and $|H|=\sum_{i\in S_2}|V_i|+|S_1|.$
In view of (\ref{eq-1.1}), we have
\begin{equation}\label{eq-1.2}
\begin{array}{ll}
e(H)&\leq\sum\limits_{i\in S_2}k(|V_i|-\frac{k+1}{2})+(k-1)(|S_1|+|S_2|-1)\vspace{0.2cm}\\
&=k|H|-\frac{1}{2}(k^2-k+2)|S_2|-|S_1|-(k-1).
\end{array}
\end{equation}

If $|S_2|\geq2$, then $\frac{1}{2}(k^2-k+2)|S_2|+(k-1)>k^2$,
and so $e(H)<k(|H|-k)$, as desired.
Now assume that $|S_2|=1$ (say $S_2=\{0\}$ and $H[V_0]=H_0$).
Then $S_1\ne \varnothing$ as $H_0$ is a proper induced subgraph of $H$.
By Lemma \ref{lem-2.1},
$H_0$ is minimally $k$-edge-connected.
If $|H_0|\geq3k$, then by Lemma \ref{lem-2.4},
we have $e(H_0)\leq k(|V_0|-k)$.
Combining (\ref{eq-1.1}), we obtain
$e(H)\leq k(|V_0|-k)+(k-1)|S_1|=k(|H|-k)-|S_1|.$
The result follows.
If $|H_0|<3k$, then
$|S_1|=|H|-|H_0|>\frac12k(k-1)$,
and by (\ref{eq-1.2}) we have
$e(H)\leq k|H|-\frac{1}{2}(k^2-k+2)-|S_1|-(k-1)<k(|H|-k)$.
This completes the proof.
\end{proof}

Now we give a vertex-connected version of
Theorem \ref{thm-2.1}, which will be proved by
a different approach.

\begin{lem}[\cite{BB-1}]\label{lem-2.2}
Let $G$ be a minimally $k$-connected graph and let $S$ be the set of vertices of degree $k$ in $G$. Then $G-S$ is empty or a forest.
\end{lem}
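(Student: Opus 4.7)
The plan is a proof by contradiction in the spirit of Mader's structural analysis of minimally $k$-connected graphs. Suppose $G-S$ contains a cycle $C$; then every vertex on $C$ has $d_G(\cdot)\geq k+1$. I would pick an arbitrary edge $e=uv\in E(C)$. Since $G$ is minimally $k$-connected and removing an edge drops connectivity by at most one, $\kappa(G-e)=k-1$, so Menger's theorem furnishes a set $T\subseteq V(G)\setminus\{u,v\}$ of size $k-1$ whose removal separates $u$ from $v$ in $G-e$. Write $A$ and $B$ for the components of $G-e-T$ containing $u$ and $v$ respectively.

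By construction of $A$ and $B$ inside $G-e$, the only $G$-edge from $u$ into $B$ is $e$ itself, so $N_G(u)\cap B=\{v\}$; symmetrically $N_G(v)\cap A=\{u\}$. Combined with $d_G(u)\geq k+1$ and $|T|=k-1$, the relation $N_G(u)\cap B=\{v\}$ forces $u$ to have at least one neighbor in $A\setminus\{u\}$, hence $A\setminus\{u\}\neq\varnothing$; similarly $B\setminus\{v\}\neq\varnothing$. Consequently both $T\cup\{u\}$ and $T\cup\{v\}$ are vertex cuts of $G$ of size $k$, and hence minimum cuts since $\kappa(G)=k$. Now the cycle enters: the path $P=C-e$ is a $u$--$v$ path in $G-e$, so it must cross $T$, and writing $u'\neq v$ for the $C$-neighbor of $u$, the relation $N_G(u)\cap B=\{v\}$ yields $u'\in A\cup T$.

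The third and crucial step is to convert this rigid local picture into an outright contradiction. My plan is to choose the pair $(e,T)$ among all admissible configurations so as to minimise the size of the smaller side, say $|A|$, then apply the first two steps to the alternative $C$-edge $uu'$ to obtain a second minimum vertex cut $T'\cup\{u\}$ of $G$ with $u$-side $A'$, and finally use a standard uncrossing/submodularity comparison of the two minimum cuts $T\cup\{u\}$ and $T'\cup\{u\}$ to exhibit a proper fragment strictly smaller than $A$, contradicting its minimality. I expect the main obstacle to be precisely this uncrossing: because the two minimum cuts share the distinguished vertex $u$, the usual submodular inequality for vertex cuts has to be applied with careful bookkeeping of where $v$ and $u'$ sit among the intersections $A\cap A'$, $A\cap B'$, $B\cap A'$, and $B\cap B'$, in order to ensure that the smaller fragment so produced is still a proper fragment of $G$. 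This is the structural heart of the argument and is the reason the lemma is quoted from \cite{BB-1} rather than rederived here.
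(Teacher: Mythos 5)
The paper does not prove this lemma at all: it is quoted from Bollob\'as's book \cite{BB-1}, where it appears as (a reformulation of) Mader's theorem that every cycle of a minimally $k$-connected graph contains a vertex of degree $k$. So there is no in-paper proof to compare against; what can be judged is whether your argument stands on its own. Your first two steps are correct and are exactly the standard opening of Mader's proof: since $G-e$ is not $k$-connected and $\kappa(G-e)\ge k-1$, Menger gives a $(k-1)$-set $T$ separating $u$ from $v$ in $G-e$ (and one should also note $u,v\notin T$, and that $G-e-T$ has exactly two components, since any component avoiding both $u$ and $v$ would make $T$ a $(k-1)$-cut of $G$ itself); the degree hypothesis $d_G(u),d_G(v)\ge k+1$ then forces $A\setminus\{u\}$ and $B\setminus\{v\}$ to be nonempty, so $T\cup\{u\}$ and $T\cup\{v\}$ are minimum cuts, and $u'\in A\cup T$. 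All of this checks out.

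The genuine gap is your third step. You announce that one should minimise $|A|$ over all admissible pairs $(e,T)$ with $e\in E(C)$, reapply the construction to the edge $uu'$, and then ``use a standard uncrossing/submodularity comparison'' to produce a strictly smaller fragment --- but you do not carry this out, and you say explicitly that you are not going to. That comparison is not routine bookkeeping: one must split $A$, $B$, $T$ against $A'$, $B'$, $T'$ into nine pieces, verify via $|N(X\cap Y)|+|N(X\cup Y)|\le |N(X)|+|N(Y)|$ and the degree conditions that one of the corner sets such as $A\cap B'$ is a nonempty fragment with neighbourhood of size exactly $k$, handle the cases $u'\in T$ versus $u'\in A$ separately, and confirm that the complementary side is also nonempty so that the corner set really is a fragment. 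This is where the entire difficulty of Mader's theorem is concentrated, and without it no contradiction has been derived. As an outline your proposal faithfully reproduces the known strategy, but as a proof it is incomplete precisely at its decisive step; since the paper legitimately cites \cite{BB-1} for this result, the honest options are either to cite it likewise or to execute the fragment argument in full.
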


Recall that $e(G)\leq k(n-k)$
for $n\geq3k-2$ and every $n$-vertex minimally $k$-connected graph $G$.
We also want to know the maximal number of edges in every subgraph
of a minimally $k$-connected graph.

\begin{thm}\label{thm-2.2}
Let $G$ be a minimally $k$-connected graph and
let $H$ be a subgraph of $G$.
Then $e(H)\leq k(|H|-1)$.
Moreover, if $|H|\geq 5k-4$,
then $e(H)\leq k(|H|-k)$,
where the equality holds if and only if
$H\cong K_{k, |H|-k}$.
\end{thm}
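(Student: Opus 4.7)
My plan is to mirror the proof of Theorem \ref{thm-2.1} but replace Lemma \ref{lem-2.3} with Lemma \ref{lem-2.2} as the central structural tool. Let $S$ denote the set of degree-$k$ vertices of $G$, so that $G - S$ is a forest by Lemma \ref{lem-2.2}. Partition $V(H) = A \sqcup B$ with $A = V(H) \cap S$. For the first bound, observe that every $v \in A$ satisfies $d_H(v) \leq d_G(v) = k$, giving $2e(H[A]) + e(A, B) \leq k|A|$ and hence $e(H[A]) + e(A, B) \leq k|A|$; and $H[B]$ is a subgraph of the forest $G - S$, so $e(H[B]) \leq \max(|B|-1, 0)$. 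Summing yields $e(H) \leq k|A| + e(H[B]) \leq k|H| - (k-1)|B| - 1 \leq k(|H|-1)$ when $B \neq \emptyset$; the case $B = \emptyset$ follows from $2e(H) \leq k|H|$.

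For the refined bound $e(H) \leq k(|H|-k)$ under $|H| \geq 5k-4$, I would split on whether $H$ is $k$-connected. If so, Lemma \ref{lem-2.1} promotes $H$ to minimally $k$-connected, and Mader's bound (cited in the introduction) gives $e(H) \leq k(|H|-k)$ with equality iff $H \cong K_{k, |H|-k}$, since $|H| \geq 5k-4 \geq 3k-2$. If $H$ is not $k$-connected, I need the strict inequality $e(H) < k(|H|-k)$. The first-part bound already yields $e(H) \leq k(|H|-k)$ whenever $|B| \geq k+1$, and the borderline equality configuration (forced $|B|=k+1$ with $H[B]$ a spanning tree, $e(H[A])=0$, and each $a \in A$ adjacent to $k$ of the $k+1$ vertices of $B$) can be ruled out either by showing that the configuration actually makes $H$ itself $k$-connected (whence Lemma \ref{lem-2.1} and Mader's uniqueness force $H \cong K_{k,|H|-k}$, contradicting the intra-$B$ edges) or by arguing that any embedding of such $H$ into a minimally $k$-connected $G$ creates a cycle in $G - S$, violating Lemma \ref{lem-2.2}. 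The sub-range $|B| \leq k$ is handled similarly: any equality configuration forces $H \supseteq K_{k,|H|-k}$ via the degree constraint on $A$-vertices combined with $|B|=k$, whence $H$ is $k$-connected; and $|B| = 0$ gives $e(H) \leq k|H|/2 < k(|H|-k)$ since $|H| > 2k$.

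The main obstacle I anticipate is rigorously ruling out the non-$k$-connected equality configurations in the range $|B| \leq k+1$. The Lemma \ref{lem-2.2} estimate is loose by up to $k-1$ when $|B| = k$, and at $|B| = k+1$ the bound is tight but admits candidate configurations a priori distinct from $K_{k,|H|-k}$. Closing this gap requires simultaneously combining the degree constraints on $A$-vertices, the forest property of $G - S$, the $k$-connectedness requirement on $G$, and the uniqueness clause of Mader's theorem, so that any putative non-$K_{k,|H|-k}$ equality configuration is forced to violate at least one of these constraints.
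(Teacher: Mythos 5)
Your proof of the first bound $e(H)\le k(|H|-1)$ is exactly the paper's: partition $V(H)$ into $A$ (the degree-$k$ vertices of $G$) and $B$, use $2e(A)+e(A,B)\le k|A|$ together with the forest property of Lemma \ref{lem-2.2}. For the refined bound, your high-level plan (promote a $k$-connected $H$ to a minimally $k$-connected one via Lemma \ref{lem-2.1} and invoke Mader; otherwise work with the partition) is sensible, and the branches ``$H$ is $k$-connected'', ``$|B|\ge k+2$'' and ``$B=\varnothing$'' do go through. The problem is that the range $1\le |B|\le k+1$, which you yourself flag as the obstacle, is not actually closed by the arguments you sketch -- and that range is where essentially all of the work in the paper's proof (its Cases 2 and 3) lives.

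Concretely. (i) At $|B|=k+1$ the extremal configuration ($H[B]$ a spanning tree, $e(A)=0$, each $a\in A$ adjacent to exactly $k$ of the $k+1$ vertices of $B$) defeats both of your escape routes: it need not be $k$-connected, since if every $a$'s unique non-neighbour lies in a fixed pair $\{b,b'\}$ with $bb'$ not a tree edge then $B\setminus\{b,b'\}$ is a $(k-1)$-cut of $H$; and it creates no cycle in $G-S$, because $G[B]=H[B]$ is a tree, so Lemma \ref{lem-2.2} is not violated. (Your instinct that this borderline is delicate is correct -- the paper's own Case 1 asserts $k|A|+|B|-1<k(|H|-k)$ for all $|B|\ge k+1$, and at $|B|=k+1$ the two sides are in fact equal -- but noticing the gap is not the same as closing it; some genuinely new argument, e.g.\ the paper's Case-2 device of locating many $A$-vertices with a common $k$-neighbourhood and applying Lemma \ref{lem-2.1} to the resulting $K_{m,k}$, is needed.) (ii) At $|B|=k$ the crude bound is $k(|H|-k)+(k-1)$, so equality at $k(|H|-k)$ does \emph{not} force $H\supseteq K_{k,|H|-k}$: one can have $e(H[B])\ge 1$ compensated by a few $A$-vertices of $H$-degree below $k$. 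The paper closes this by showing, via Lemma \ref{lem-2.1}, that at most $k-1$ vertices of $A$ satisfy $N_G(v)=B$, and charging every remaining $A$-vertex at rate $k-\frac12$. (iii) For $1\le|B|\le k-1$ the crude bound exceeds $k(|H|-k)$ by $(k-1)(k+1-|B|)>0$, so ``handled similarly'' cannot work at all; one needs the additional estimate $e(A)\le\frac12|A|(k-|B|)$ obtained from $e(A,B)\le|A||B|$, followed by the two-variable optimization over $|A|\ge 4k-3$, $|B|\le k-1$ that the paper carries out. Until these three sub-cases are supplied with actual arguments, the proposal is incomplete.
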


\begin{proof}
Firstly, we show $e(H)\leq k(|H|-1)$.
We partition $V(H)$ into two parts: $V(H)=V_1\cup V_2$,
where $V_1$ is the set of vertices of degree $k$ in $G$.
If $|V_2|=0$, then
$e(H)\leq\frac{k|H|}{2}\leq k(|H|-1)$, as desired.
If $|V_2|\geq 1$,
from Lemma \ref{lem-2.2} we know that
$G[V_2]$ is a forest, and so $e(V_2)\leq |V_2|-1$.
Thus, we can get an upper bound of $e(H)$ as below:
\begin{equation}\label{eq-1.0}
\begin{array}{ll}
e(H)=e(V_1)+e(V_1,V_2)+e(V_2)\leq k|V_1|+(|V_2|-1),
\end{array}
\end{equation}
where the equality holds
if and only if $G[V_2]$ is a tree and
$N_G(v)\subseteq V_2$ for each $v\in V_1$.
It is clear that $k|V_1|+|V_2|-1\leq k(|V_1|+|V_2|-1)$,
and hence $e(H)\leq k(|H|-1)$.

Next, we shall distinguish three cases to
show $e(H)\leq k(|H|-k)$ for $|H|\geq 5k-4$.
If $k=1$, then $G$ is a tree. Clearly, the result holds.  In the following, we may assume $k\leq 2$.

\textbf{Case 1: $|V_2|\geq k+1$.}
From (\ref{eq-1.0}) we have
\begin{equation*}\begin{array}{ll}
e(H)\leq k|V_1|+|V_2|-1<k(|V_1|+|V_2|-k)=k(|H|-k).
\end{array}
\end{equation*}
The result follows.

\textbf{Case 2: $|V_2|=k$.} Then $|V_1|\geq 4(k-1)$.
If $e(V_2)=0$, then by (\ref{eq-1.0}),
we have $e(H)\leq k|V_1|=k(|V_1|+|V_2|-k)=k(|H|-k)$,
with equality if and only if $H\cong K_{k,|H|-k}$.

Now, assume that $e(V_2)\geq1$,
and let $V_1'=\{v\in V_1\mid N_G(v)=V_2\}$.
Then $K_{|V_1'|,|V_2|}\subseteq G[V_1'\cup V_2]$.
We will see that $|V_1'|\leq k-1$.
Otherwise, if $|V_1'|\geq k$,
then $G[V_1'\cup V_2]$ is $k$-connected.
By Lemma \ref{lem-2.1}, $G[V_1'\cup V_2]$ is
minimally $k$-connected, which implies that
$G[V_1'\cup V_2]\cong K_{|V_1'|, |V_2|}$ and so $e(V_2)=0$, a contradiction.
Hence, $|V_1'|\leq k-1$.

On the other hand, let $V_1''=V_1\setminus V_1',$
then
\begin{equation*}\begin{array}{ll}
e\big(V_1''\big)+e\big(V_1'',V_2\big)
\leq\big(|V_2|-1\big)|V_1''|+\frac12|V_1''|
=\big(k-\frac12\big)|V_1''|.
\end{array}
\end{equation*}
Since $|V_1'|\leq k-1$ and $|V_1'|+|V_1''|=|V_1|$, we further obtain
\begin{equation*}\begin{array}{ll}
e\big(V_1\big)+e\big(V_1,V_2\big)\leq k|V_1'|
+e\big(V_1''\big)+e\big(V_1'',V_2\big)
\leq\big(k-\frac{1}{2}\big)|V_1|+\frac12\big(k-1\big).
\end{array}
\end{equation*}
Recall that $|V_1|\geq4(k-1)$ and $e(V_2)\leq k-1$.
Thus we also have
\begin{equation*}\begin{array}{ll}
e\big(H\big)
\leq\big(k-\frac{1}{2}\big)|V_1|+\frac32\big(k-1\big)
<k|V_1|=k\big(|H|-k\big).
\end{array}
\end{equation*}

\textbf{Case 3: $|V_2|\leq k-1$.} Then $|V_1|\geq4k-3$.
Let $|V_1|=x$ and $|V_2|=y$.
Then
\begin{equation*}\begin{array}{ll}
e(H)&=e(V_1,V_2)+e(V_1)+e(V_2)\vspace{0.2cm}\\
&\leq|V_1||V_2|+\frac{1}{2}|V_1|(k-|V_2|)+(|V_2|-1)\vspace{0.2cm}\\
&=\frac{1}{2}xy+\frac{1}{2}kx+(y-1),
\end{array}
\end{equation*}
Notice that $k(|H|-k)=k(x+y-k)$.
Let
\begin{equation*}\begin{array}{ll}
f(x,y)=\frac{1}{2}xy+\frac{1}{2}kx+(y-1)-k(x+y-k).
\end{array}
\end{equation*}
It suffices to show $f(x,y)<0$ for $x\geq 4k-3$ and $y\leq k-1$.
Note that $\frac{\partial f(x,y)}{\partial x}=\frac{1}{2}(y-k)<0$ and $\frac{\partial f(x,y)}{\partial y}=\frac{1}{2}x+1-k>0$.
Hence, $f(x,y)$ is decreasing with respect to $x$
and increasing with respect to $y$.
Therefore,
$f(x,y)\mid_{max}=f(4k-3,k-1)=-\frac12,$
as desired.
\end{proof}

Observe that $\frac12k(k+5)\geq 5k-4$ for every positive integer $k$.
Combining Theorems \ref{thm-2.1} and \ref{thm-2.2},
we immediately obtain Theorem \ref{thm-1.0}.

\section{Spectral extremal results}\label{3}

Let $G$ be a minimally $k$-(edge)-connected graph
of order $n$.
By Perron-Frobenious theorem, there exists a
positive unit eigenvector with respect to $\rho(G)$,
which is called the \emph{Perron vector} of $G$.
Let $X=(x_1,x_2,\ldots,x_n)^T$ be the Perron vector
with coordinate $x_{u^*}=\max\{x_i\mid i\in V(G)\}$.
In this section, we first show Theorem \ref{thm-1.1},
that is, if $\rho^2(G)\geq k(n-k)$, then $G$ contains a $k$-vertex subset
$L$ such that $x_v\geq (1-\frac{1}{2k})x_{u^*}$ and
$d(v)\geq (1-\frac{2}{3k})n$ for each vertex $v\in L$.
Before proceeding, we define three subsets of $V(G)$.
\begin{equation*}
\begin{array}{ll}
L_{\alpha}=\{v\in V(G)\mid x_v>\alpha x_{u^*}\},& \mbox{where $0<\alpha\leq \frac{1}{24k(k+1)}$};\vspace{0.2cm}\\
L_{\beta}=\{v\in V(G)\mid x_v>\beta x_{u^*}\},& \mbox{where $\frac53\alpha\leq\beta\leq \frac{1}{6k^2}$};\vspace{0.2cm}\\
L_\gamma=\{v\in V(G)\mid x_v\geq\gamma x_{u^*}\},&
\mbox{where $\frac{1}{2k}\leq\gamma\leq1$}.
\end{array}
\end{equation*}
Clearly, $L_\gamma\subseteq L_{\beta}\subseteq L_{\alpha}$.
In the following,
assume that $k\geq3$ and $n\geq\frac{18k}{\alpha^2}$.
We shall prove some lemmas on these three subsets.

\begin{lem}\label{lem-3.1}
$|L_{\alpha}|<\sqrt{4kn}$.
\end{lem}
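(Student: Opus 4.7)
The plan is to sum the eigenvalue equation over the vertices of $L_\alpha$, bound the resulting edge sums via Theorem~\ref{thm-1.0}, and then combine with the spectral lower bound $\rho(G)\geq\sqrt{k(n-k)}$, which is in force throughout Section~\ref{3} under the standing hypothesis of Theorem~\ref{thm-1.1}. (Without that hypothesis the lemma would fail on large $k$-regular $k$-connected graphs, where $|L_\alpha|=n$.)

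First I would apply Theorem~\ref{thm-1.0} in two places. Since $n\geq 18k/\alpha^2$ vastly exceeds $\tfrac{1}{2}k(k+5)$, the theorem yields $e(G)\leq k(n-k)\leq kn$. Applied to the induced subgraph $G[L_\alpha]$, its first (unconditional) part gives $e(L_\alpha)\leq k(|L_\alpha|-1)<k|L_\alpha|$ no matter how large $L_\alpha$ is.

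Next, starting from $\rho(G)\,x_u=\sum_{v\sim u}x_v$ and summing over $u\in L_\alpha$, I would split the edges incident to $L_\alpha$ into those lying inside $L_\alpha$ (each such edge $uv$ contributes $x_u+x_v\leq 2x_{u^*}$) and those with the other endpoint $v\in V(G)\setminus L_\alpha$ (each contributing $x_v\leq\alpha x_{u^*}$ by the defining property of $L_\alpha$). This yields
\[
\rho(G)\sum_{u\in L_\alpha}x_u \;\leq\; 2e(L_\alpha)\,x_{u^*}+\alpha\,e\bigl(L_\alpha,V(G)\setminus L_\alpha\bigr)\,x_{u^*} \;\leq\; \bigl(2k|L_\alpha|+\alpha kn\bigr)x_{u^*}.
\]
Because $x_u>\alpha x_{u^*}$ for every $u\in L_\alpha$, the left-hand side strictly exceeds $\rho(G)\alpha|L_\alpha|x_{u^*}$. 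Dividing by $x_{u^*}$ and rearranging produces
\[
\bigl(\rho(G)\alpha-2k\bigr)|L_\alpha| \;<\; \alpha kn.
\]

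Finally I would invoke $\rho(G)\geq\sqrt{k(n-k)}$ together with $n\geq 18k/\alpha^2$ and $\alpha\leq 1/(24k(k+1))$. A short calculation based on $\sqrt{k(n-k)}\geq\sqrt{kn}(1-k/n)$ and $k/n\leq\alpha^2/18$ will show $\rho(G)\alpha-2k>\tfrac{1}{2}\alpha\sqrt{kn}$, and substituting this into the previous inequality immediately gives $|L_\alpha|<2\sqrt{kn}=\sqrt{4kn}$. The hard part will be keeping this last algebraic step tight: the threshold $n\geq 18k/\alpha^2$ is only just large enough to clear the natural threshold $16k/\alpha^2$ that arises here, so the $\sqrt{k(n-k)}$-versus-$\sqrt{kn}$ correction must be tracked explicitly rather than absorbed by a loose estimate.
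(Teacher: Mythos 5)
Your proposal is correct and follows essentially the same route as the paper's proof: summing the eigenvalue equation over $L_\alpha$, bounding $e(L_\alpha)\leq k|L_\alpha|$ and $e(L_\alpha,V(G)\setminus L_\alpha)\leq e(G)\leq k(n-k)$ via Theorem~\ref{thm-1.0}, and closing with $\rho\geq\sqrt{k(n-k)}$ and $n\geq 18k/\alpha^2$. The only cosmetic difference is that you keep $kn$ where the paper keeps $k(n-k)$ on the right-hand side, which is why your final estimate is tighter to verify, but your tracking of the $\sqrt{k(n-k)}$ correction does clear the bar.
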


\begin{proof}
For every $v\in L_{\alpha}$,
we have $\rho x_{v}=\sum_{v\in N(v)}x_v$, and thus
\begin{equation}\label{eq-2.1}
\begin{array}{ll}
\rho x_{v}=\!\!\!\sum\limits_{v\in N(v)\cap L_{\alpha}}\!\!\!\!x_v
+\!\!\!\sum\limits_{v\in N(v)\setminus L_{\alpha}}\!\!\!x_v
\leq\Big(d_{L_{\alpha}}(v)+\alpha\cdot d_{V(G)\setminus L_{\alpha}}(v)\Big)x_{u^*}.
\end{array}
\end{equation}
Since $\rho x_{v}\geq\sqrt{k(n-k)}\alpha x_{u^*}$ for $v\in L_{\alpha}$,
from (\ref{eq-2.1}) we have
\begin{equation}\label{eq-2.2}
\sqrt{k(n-k)} \alpha \leq d_{L_{\alpha}}(v)+\alpha\cdot
d_{V(G)\setminus L_{\alpha}}(v).
\end{equation}
Summing (\ref{eq-2.2}) over all $v\in L_{\alpha}$, we have
\begin{equation}\label{eq-2.3}
|L_{\alpha}| \sqrt{k(n-k)}\alpha\leq 2e(L_{\alpha})
+\alpha\cdot e(L_{\alpha},V(G)\setminus L_{\alpha}).
\end{equation}
By Theorem \ref{thm-1.0}, we have
$e(L_{\alpha})\leq k|L_{\alpha}|$
and $e(L_{\alpha}, V(G)\setminus L_{\alpha})\leq e(G)\leq k(n-k)$.
Combining (\ref{eq-2.3}), we get that
\begin{equation}\label{eq-2.4}
|L_{\alpha}|\sqrt{k(n-k)}\leq\frac{2k}{\alpha}|L_{\alpha}|+k(n-k).
\end{equation}
Since $n\geq\frac{18k}{\alpha^2}$, we have $n-k>\frac{16k}{\alpha^2}$,
and hence $\frac{2k}{\alpha}<\frac{1}{2}\sqrt{k(n-k)}$.
Combining (\ref{eq-2.4}), we obtain that $|L_{\alpha}|<2\sqrt{k(n-k)}$,
and thus $|L_{\alpha}|<\sqrt{4kn}$, as desired.
\end{proof}

For a vertex $v\in V(G)$,
let $N[v]=N(v)\cup \{v\}$
and $N^2(v)$ denote the set of vertices at distance two
from $v$.

\begin{lem}\label{lem-3.2}
$|L_{\beta}|<\frac{12k}{\alpha}$.
\end{lem}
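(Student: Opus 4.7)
The plan is to mimic the proof of Lemma \ref{lem-3.1}, refining the two-part partition of each neighborhood into a three-part partition that reflects the chain $L_\beta \subseteq L_\alpha \subseteq V(G)$. For each $v \in L_\beta$, I would write
$$\rho x_v \,=\, \sum_{u \in N(v) \cap L_\beta} x_u \,+\, \sum_{u \in N(v) \cap (L_\alpha \setminus L_\beta)} x_u \,+\, \sum_{u \in N(v) \setminus L_\alpha} x_u$$
and bound the three parts respectively by $d_{L_\beta}(v)\, x_{u^*}$, $\beta\, d_{L_\alpha\setminus L_\beta}(v)\, x_{u^*}$ and $\alpha\, d_{V\setminus L_\alpha}(v)\, x_{u^*}$, using the defining upper estimates on the entries in each layer. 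Since $v\in L_\beta$ gives $x_v > \beta x_{u^*}$ and the hypothesis of Theorem \ref{thm-1.1} gives $\rho \ge \sqrt{k(n-k)}$, I would obtain the per-vertex inequality
$$\sqrt{k(n-k)}\,\beta \,<\, d_{L_\beta}(v) \,+\, \beta\, d_{L_\alpha\setminus L_\beta}(v) \,+\, \alpha\, d_{V\setminus L_\alpha}(v),$$
and then sum over $v\in L_\beta$ to get
$$|L_\beta|\sqrt{k(n-k)}\,\beta \,<\, 2\,e(L_\beta) \,+\, \beta\, e(L_\beta, L_\alpha\setminus L_\beta) \,+\, \alpha\, e(L_\beta, V\setminus L_\alpha).$$

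The next step is to control each of the three edge counts. Theorem \ref{thm-1.0} applied to the subgraph $L_\beta$ yields $e(L_\beta) \le k|L_\beta|$, and applied to $L_\alpha$ yields $e(L_\beta, L_\alpha\setminus L_\beta) \le e(L_\alpha) \le k|L_\alpha|$; Lemma \ref{lem-3.1} then sharpens this to $k|L_\alpha| < 2k\sqrt{kn}$. For the remaining cross-edge count $e(L_\beta, V\setminus L_\alpha)$, I plan to exploit the eigenvalue equation at each $u\notin L_\alpha$: since $x_u \le \alpha x_{u^*}$, we have $\rho x_u \le \rho\alpha\, x_{u^*}$, while $\rho x_u = \sum_{w\in N(u)} x_w \ge \beta\, d_{L_\beta}(u)\, x_{u^*}$, giving the per-vertex bound $d_{L_\beta}(u) \le \rho\alpha/\beta$ and hence $e(L_\beta, V\setminus L_\alpha) \le (n - |L_\alpha|)\,\rho\alpha/\beta$. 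A parallel global bound $e(L_\beta, V\setminus L_\alpha) \le k(n-k)$ is also available from Theorem \ref{thm-1.0}.

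To finish, one checks using $n \ge 18k/\alpha^2$ and $\beta \ge \tfrac{5}{3}\alpha$ that $\sqrt{k(n-k)}\,\beta \ge 4k$, so that $\sqrt{k(n-k)}\,\beta - 2k \ge \tfrac{1}{2}\sqrt{k(n-k)}\,\beta$; isolating $|L_\beta|$ on the left of the summed inequality and substituting the three edge bounds should yield the target $|L_\beta| < 12k/\alpha$ once the specific constants $\alpha = \tfrac{1}{24k(k+1)}$ and $\beta \le \tfrac{1}{6k^2}$ are inserted.

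The main obstacle I anticipate is controlling the third contribution $\alpha\, e(L_\beta, V\setminus L_\alpha)$ uniformly in $n$. The crude use of $e(L_\beta, V\setminus L_\alpha) \le k(n-k)$ produces a term of size $\alpha\sqrt{k(n-k)}/\beta$ after division, which is $O(k/\alpha)$ only for $n$ close to the threshold $18k/\alpha^2$ and grows with $n$ beyond it. The delicate step is therefore to balance this global bound against the per-vertex bound $(n-|L_\alpha|)\rho\alpha/\beta$, using Lemma \ref{lem-3.1} to control $n - |L_\alpha|$, so that the cross-edge contribution is kept of order $k/\alpha$ independently of how large $n$ becomes.
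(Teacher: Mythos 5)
Your plan has a genuine gap, and you have in fact put your finger on exactly where it is: the term $\alpha\, e(L_\beta, V\setminus L_\alpha)$ cannot be controlled by any first-order argument. After dividing your summed inequality by $\beta\sqrt{k(n-k)}$, the crude bound $e(L_\beta,V\setminus L_\alpha)\le k(n-k)$ leaves a contribution of $\tfrac{\alpha}{\beta}\sqrt{k(n-k)}=\Theta(\sqrt{kn})$, which is unbounded in $n$; and this is not an artifact of sloppiness, since already in $K_{k,n-k}$ (with $L_\beta$ the $k$-side) one has $e(L_\beta,V\setminus L_\alpha)=k(n-k)$ exactly, so the global bound is tight. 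Your proposed rescue via the per-vertex estimate $d_{L_\beta}(u)\le\rho\alpha/\beta$ makes things worse, not better: summing it over the roughly $n$ vertices outside $L_\alpha$ gives $e(L_\beta,V\setminus L_\alpha)\le n\rho\alpha/\beta=O(n^{3/2})$, which for large $n$ exceeds the global bound $k(n-k)$. There is no balancing of the two that yields a contribution of order $k/\alpha$ independent of $n$; the one-step eigenvalue equation simply cannot separate $L_\beta$ from $L_\alpha$ beyond the constant factor $\alpha/\beta=3/5$, so your method can only reproduce the $O(\sqrt{kn})$ bound of Lemma \ref{lem-3.1}.

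The paper's proof uses a qualitatively different, second-order mechanism. Expanding $\rho^2 x_v = d(v)x_v + \sum_{u\in N(v)}d_{N(v)}(u)x_u + \sum_{u\in N^2(v)}d_{N(v)}(u)x_u$ for $v\in L_\beta$ and bounding the walk counts via Theorem \ref{thm-1.0} applied to $N[v]$ and to $N(v)\cup L_\alpha$ (together with Lemma \ref{lem-3.1} to control $|L_\alpha|$), one gets $\rho^2 x_v \le \bigl((2k-1)d(v)+\tfrac{3\alpha}{2}kn-k^2\bigr)x_{u^*}$; combined with $\rho^2 x_v > (\beta kn - k^2)x_{u^*}$ this forces $d(v) > \tfrac{\alpha}{12}n + k$ for every $v\in L_\beta$. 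Once each vertex of $L_\beta$ is known to have degree linear in $n$, the global edge count $e(G)\le kn$ together with $e(L_\beta)\le k|L_\beta|$ immediately caps $|L_\beta|$ at $12k/\alpha$. The missing idea in your proposal is precisely this degree lower bound extracted from the two-step walk; without it, no bound on $|L_\beta|$ that is uniform in $n$ is reachable.
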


\begin{proof}
We proceed the proof by contradiction.
Suppose that $|L_{\beta}|\geq\frac{12k}{\alpha}$.
Recall that $L_{\beta}\subseteq L_\alpha$ and $\alpha\leq\frac1{24k(k+1)}$.
Then $|L_\alpha|\geq\frac{12k}{\alpha}\geq
\max\{5k-4,\frac12k(k+5)\}$.
We first prove that $d(v)>\frac{\alpha}{12}n+k$
for each vertex $v\in L_\beta$.

By Theorem \ref{thm-1.0}, we get that
$e(G)\leq kn$,
$e(N[v])\leq k(|N[v]|-1)=kd(v)$ and
$e(N(v)\cup L_{\alpha})\leq k(d(v)+|L_\alpha|-k)$.
Since $v\in L_\beta$, we can easily see that $v\in L_\alpha$.
Let $S=N(v)\cup(L_{\alpha}\setminus\{v\})$.
Then $e(S)=e(N(v)\cup L_{\alpha})-d(v)\leq(k-1)d(v)+k|L_\alpha|-k^2$,
where $|L_{\alpha}|<\sqrt{4kn}<\frac{\alpha}2n$
by Lemma \ref{lem-3.1} and the assumption that $n\geq\frac{18k}{\alpha^2}$.

It is easy to see that
\begin{equation*}
\begin{array}{ll}
d(v)x_{v}+\!\!\sum\limits_{v\in N(v)}\!\!d_{N(v)}(u)x_u
\leq\Big(d(v)+2e(N(v)\Big)x_{u^*}=\Big(e(N[v])+e(N(v))\Big)x_{u^*}.
\end{array}
\end{equation*}
Note that $S=N(v)\cup(L_{\alpha}\setminus\{v\})$.
Then $e\big(N^2(v)\cap L_{\alpha},N(v)\big)\leq e(S)-e\big(N(v)\big)$
and
\begin{equation*}
\begin{array}{ll}
\sum\limits_{u\in N^2(v)}\!\!\!d_{N(v)}(u)x_u=
\!\!\!\!\sum\limits_{u\in N^2(v)\cap L_{\alpha}}\!\!\!\!d_{N(v)}(u)x_u
+\!\!\!\!\sum\limits_{u\in N^2(v)\setminus L_{\alpha}}\!\!\!\!d_{N(v)}(u)x_u
\leq\Big(e(S)-e(N(v))
+\alpha\cdot e(G)\Big)x_{u^*}.
\end{array}
\end{equation*}
Combining the above two inequalities, we obtain
\begin{equation*}
\begin{array}{ll}
\rho^2x_{v}&=d(v)x_{v}+\sum\limits_{v\in N(v)}d_{N(v)}(u)x_u+\sum\limits_{u\in N^2(v)}d_{N(v)}(u)x_u \vspace{0.2cm}\\
&\leq\Big(e(N[v])+e(S)+\alpha\cdot e(G)\Big)x_{u^*}.\vspace{0.2cm}\\
&\leq\Big((2k-1)d(v)+\frac{3\alpha}2kn-k^2\Big)x_{u^*}.
\end{array}
\end{equation*}
Notice that $\frac53\alpha\leq\beta<1$ and
$\rho^2 x_{v}\geq k(n-k)\beta x_{u^*}>(\beta kn-k^2)x_{u^*}$
for each vertex $v\in L_\beta$.
In view of the above inequality, we have
$(\beta-\frac32\alpha)kn<(2k-1)d(v),$
which yields that
$d(v)>\frac{k}{2k-1}(\beta-\frac32\alpha)n>\frac{\alpha}{12}n+k$
for each vertex $v\in L_\beta$.

By Theorem \ref{thm-1.0},
we also have $e(L_{\beta})\leq k|L_{\beta}|$.
Observe that
$\sum_{u\in V(G)\setminus L_{\beta}}d(u)
\geq e(L_{\beta},V(G)\setminus L_{\beta})
=\sum_{v\in L_{\beta}}d(v)-2e(L_{\beta}).$
Therefore,
\begin{equation*}
\begin{array}{ll}
2e(G)=\sum\limits_{v\in L_{\beta}}d(v)
+\!\!\sum\limits_{u\in V(G)\setminus L_{\beta}}\!\!d(u)
\geq2\sum\limits_{v\in L_{\beta}}d(v)-2e(L_{\beta})
>|L_{\beta}|\frac{\alpha}{6}n.
\end{array}
\end{equation*}
Combining $e(G)\leq kn$,
we obtain $|L_\beta|<\frac{12k}{\alpha}$.
This completes the proof.
\end{proof}

%\newpage
\begin{lem}\label{lem-3.3}
$d(v)>\big(\gamma-\frac{1}{6k}\big)n$ for each $v\in L_\gamma$.
\end{lem}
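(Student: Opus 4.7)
The plan is to adapt the framework used in the proof of Lemma~\ref{lem-3.2}, substituting $L_\beta$ and $\beta$ for $L_\alpha$ and $\alpha$, now that $|L_\beta|$ has been bounded. For $v \in L_\gamma$, I would start from the walk-counting identity
\[
\rho^2 x_v = d(v) x_v + \sum_{w \in N(v)} d_{N(v)}(w) x_w + \sum_{w \in N^2(v)} d_{N(v)}(w) x_w,
\]
bound the middle sum by $2e(N(v)) x_{u^*}$, and split the last sum over $N^2(v) \cap L_\beta$ (where $x_w \le x_{u^*}$) and $N^2(v) \setminus L_\beta$ (where $x_w \le \beta x_{u^*}$). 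Setting $S = N(v) \cup (L_\beta \setminus \{v\})$, the edges from $N(v)$ to $N^2(v) \cap L_\beta$ contribute at most $e(S) - e(N(v))$, which produces the master inequality
\[
\rho^2 x_v \le \bigl(e(N[v]) + e(S) + \beta \cdot e(G)\bigr) x_{u^*}.
\]

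Next I would apply Theorem~\ref{thm-1.0} to bound each piece: $e(N[v]) \le k\,d(v)$, $e(G) \le k(n-k)$, and $e(S) = e(N(v) \cup L_\beta) - d(v) \le (k-1)d(v) + k|L_\beta| - k^2$. Plugging in the bound $|L_\beta| < 12k/\alpha$ from Lemma~\ref{lem-3.2} yields
\[
\rho^2 x_v \le \bigl((2k-1) d(v) + 12k^2/\alpha - k^2 + \beta k n\bigr) x_{u^*}.
\]
Pairing this with the lower bound $\rho^2 x_v \ge \gamma k(n-k) x_{u^*}$, valid because $v \in L_\gamma$ gives $x_v \ge \gamma x_{u^*}$ and by hypothesis $\rho^2 \ge k(n-k)$, together with the standing assumptions $\beta \le 1/(6k^2)$, $n \ge 18k/\alpha^2$, and $k \ge 3$, should deliver the claimed $d(v) > (\gamma - 1/(6k))n$.

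The hardest point will be extracting the precise coefficient $(\gamma - 1/(6k))$ in the final lower bound. A careless rearrangement of the displayed inequality gives only $d(v) \gtrsim \gamma n/2$, which is too weak. To reach the advertised estimate, I expect one must avoid the lossy step $d(v) x_v \le d(v) x_{u^*}$, instead keeping $d(v) x_v$ weighted by $x_v$ throughout and invoking $x_v \ge \gamma x_{u^*}$ directly on the left-hand side; the error term $\beta k n$, which is at most $n/(6k)$ thanks to $\beta \le 1/(6k^2)$, then materializes as exactly the $1/(6k)$ slack in the target bound, while the constant $12k^2/\alpha$ from $|L_\beta|$ is absorbed by the standing assumption $n \ge 18k/\alpha^2$.
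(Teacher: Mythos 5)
Your plan correctly assembles the right ingredients (the two-step walk identity, a split by membership in $L_\beta$, Lemma~\ref{lem-3.2} to control $|L_\beta|$, Theorem~\ref{thm-1.0} for the edge counts, and the lower bound $\rho^2 x_v\ge \gamma k(n-k)x_{u^*}$), and you are right to be suspicious of the coefficient you obtain --- but the repair you propose does not close the gap. The loss in your master inequality does not come from the step $d(v)x_v\le d(v)x_{u^*}$: keeping that term weighted by $x_v$ and moving it to the left-hand side only lowers the coefficient of $d(v)$ from $2k-1$ to about $2k-2+\gamma$, and you still land at $d(v)\gtrsim \gamma n/2$. This fails throughout the range: at $\gamma=\frac{1}{2k}$ the target is $\frac{n}{3k}$ but you get roughly $\frac{n}{4k}$, and at $\gamma$ near $1$ the target is roughly $n$ but you get roughly $n/2$. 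The real culprit is the bound $\sum_{w\in N(v)}d_{N(v)}(w)x_w\le 2e(N(v))x_{u^*}\le 2(k-1)d(v)x_{u^*}$, where you pay the full weight $x_{u^*}$ for every first-neighbourhood vertex even though almost all of $N(v)$ lies outside $L_\beta$ and hence satisfies $x_w\le\beta x_{u^*}$.

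The paper's proof fixes exactly this point. It argues by contradiction (assume $d(v_0)\le(\gamma_0-\frac{1}{6k})n$ where $x_{v_0}=\gamma_0x_{u^*}$), sets $R=N(v_0)\cup N^2(v_0)$, and splits the \emph{entire} sum $\sum_{v\in R}d_{N(v_0)}(v)x_v$ --- not merely the $N^2(v_0)$ part --- according to membership in $L_\beta$. The contribution from $R\setminus L_\beta$ is at most $2e(G)\beta x_{u^*}\le 2k\beta n\,x_{u^*}$, which is negligible since $\beta\le\frac{1}{6k^2}$; the contribution from $R\cap L_\beta$ is at most $\big((k-1)d(v_0)+(2k+1)|L_\beta|\big)x_{u^*}$ by Theorem~\ref{thm-1.0} together with $|L_\beta|<\frac{12k}{\alpha}$. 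This drives the coefficient of $d(v_0)$ down from $2k-1$ to $\gamma_0+k-1$, which is precisely what makes the threshold $(\gamma-\frac{1}{6k})n$ attainable; the argument then closes with a sign analysis of the quadratic $(\gamma_0-1)(\gamma_0-\frac{1}{6k})$ on $[\tfrac{1}{2k},1]$. In short: right toolbox, but the $\beta$-discount must be applied to $N(v)$ itself, not only to $N^2(v)$, and your proposed fix does not accomplish that.
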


\begin{proof}
Suppose to the contrary that there exists a vertex $v_0\in L_\gamma$
with $d(v_0)\leq(\gamma-\frac{1}{6k})n$.
We may assume that $x_{v_0}=\gamma_0x_{u^*}$.
By the definition of $L_\gamma$,
we know that $\frac{1}{2k}\leq\gamma\leq \gamma_0\leq1$,
and thus $d(v_0)\leq(\gamma_0-\frac{1}{6k})n$.

Set $R=N(v_0)\cup N^2(v_0)$.
Then $x_v\leq\beta x_{u^*}$ for each $v\in R\setminus L_\beta$.
Therefore,
\begin{equation}\label{eq-2.5}
\begin{array}{ll}
\rho^2x_{v_0}&=d(v_0)x_{v_0}+\sum\limits_{v\in R}d_{N(v_0)}(v)x_v \vspace{0.2cm}\\
&=d(v_0)x_{v_0}+\sum\limits_{v\in R\setminus L_{\beta}}d_{N(v_0)}(v)x_v+
\sum\limits_{v\in R\cap L_{\beta}}d_{N(v_0)}(v)x_v\vspace{0.2cm}\\
&\leq \Big(\gamma_0d(v_0)+\beta\sum\limits_{v\in R\setminus L_{\beta}}d_{N(v_0)}(v)+\sum\limits_{v\in R\cap L_{\beta}}d_{N(v_0)}(v)\Big)x_{u^*}.
\end{array}
\end{equation}
Since $N(v_0)\subseteq R$, we can see that
\begin{equation}\label{eq-2.6}
\begin{array}{ll}
\sum\limits_{v\in R\setminus L_{\beta}}d_{N(v_0)}(v)
\leq\sum\limits_{v\in R}d_{R}(v)=2e(R)\leq2e(G)\leq2kn.
\end{array}
\end{equation}
Observe that $R\cap L_{\beta}\subseteq L_{\beta}\setminus \{v_0\}$.
We also have
\begin{equation}\label{eq-2.7}
\begin{array}{ll}
\sum\limits_{v\in R\cap L_{\beta}}d_{N(v_0)}(v)
&\leq\sum\limits_{v\in L_{\beta}\setminus \{v_0\}}d_{N(v_0)\cap L_{\beta}}(v)+\sum\limits_{v\in L_{\beta}\setminus \{v_0\}}d_{N(v_0)\setminus L_{\beta}}(v)\vspace{0.2cm}\\
&\leq 2e\big(L_{\beta}\big)+e\big(L_{\beta},N(v_0)\setminus L_{\beta}\big)
-|N(v_0)\setminus L_{\beta}|.
\end{array}
\end{equation}
Furthermore,
$e\big(L_{\beta},N(v_0)\setminus L_{\beta}\big)
\leq e\big(L_{\beta}\cup N(v_0)\big)-e\big(L_{\beta}\big).$
Notice that $e(L_\beta)\leq k|L_\beta|$ and
$e\big(L_{\beta}\cup N(v_0)\big)\leq k\big(|L_\beta|+d(v_0)\big)$.
Combining (\ref{eq-2.7}),
we obtain
\begin{equation}\label{eq-2.8}
\begin{array}{ll}
\sum\limits_{v\in R\cap L_{\beta}}d_{N(v_0)}(v)
&\leq e\big(L_{\beta}\cup N(v_0)\big)-|N(v_0)\setminus L_{\beta}|+e(L_{\beta})\\
&\leq(k-1)d(v_0)+(k+1)|L_{\beta}|+e(L_{\beta})\vspace{0.2cm}\\
&\leq(k-1)d(v_0)+(2k+1)|L_{\beta}|.
\end{array}
\end{equation}
Substituting (\ref{eq-2.6}) and (\ref{eq-2.8}) into (\ref{eq-2.5}),
we get that
\begin{equation}\label{eq-2.9}
\begin{array}{ll}
\rho^2x_{v_0}
&\leq\Big(\gamma_0d(v_0) +2k\beta n+(k-1)d(v_0)+(2k+1)|L_{\beta}|\Big)x_{u^*} \vspace{0.2cm}\\
&=\Big((\gamma_0+k-1)d(v_0)+2k\beta n+(2k+1)|L_{\beta}|\Big)x_{u^*}.
\end{array}
\end{equation}
Since $n\geq\frac{18k}{\alpha^2}$ and $\alpha<\frac{1}{24k^2}$,
we have $\frac{12k}{\alpha}\leq\frac{2}{3}\alpha n<\frac{n}{(6k)^2}$.
Moreover, by Lemma \ref{lem-3.2},
we have $|L_{\beta}|<\frac{12k}{\alpha}$.
Thus, we can check that
$(2k+1)|L_{\beta}|<\frac n{6k}-k^2\leq\frac n{6k}-k^2\gamma_0$.
Recall that $\rho^2x_{v_0}\geq k(n-k)\gamma_0x_{u^*}$
and $d(v_0)\leq (\gamma_0-\frac{1}{6k})n$.
Combining (\ref{eq-2.9}),
we obtain that
\begin{equation*}
\begin{array}{ll}
k(n-k)\gamma_0<
\big(\gamma_0+k-1\big)\big(\gamma_0-\frac{1}{6k}\big)n
+2k\beta n+\frac n{6k}-k^2\gamma_0,
\end{array}
\end{equation*}
which gives
$k\gamma_0<(\gamma_0+k-1)(\gamma_0-\frac{1}{6k})+2k\beta+\frac{1}{6k}.$
Recall that $\beta\leq \frac1{6k^2}$.
It follows that
\begin{equation}\label{eq-2.11}
\begin{array}{ll}
\big(\gamma_0-1\big)\big(\gamma_0-\frac{1}{6k}\big)
>\frac{k-1}{6k}-2k\beta\geq\frac{k-3}{6k}\geq0.
\end{array}
\end{equation}
Now let $f(\gamma)=(\gamma-1\big)\big(\gamma-\frac{1}{6k})$,
where $\frac{1}{2k}\leq\gamma\leq1$.
Obviously, $f(\gamma)|_{\max}=f(1)=0$,
which contradicts (\ref{eq-2.11}).
The proof is completed.
\end{proof}

Recall that $L_\gamma=\{u\in V(G)\mid x_u\geq\gamma x_{u^*}\}$,
where $\frac1{2k}\leq \gamma\leq1$.
Let $\gamma_0:=\frac1{2k}$. Clearly, $L_{1-\gamma_0} \subseteq L_{\gamma_0}$.
We will see that every vertex $u\in L_{\gamma_0}$ has
a larger value $x_u$.

\begin{lem}\label{lem-3.4}
$L_{\gamma_0}=L_{1-\gamma_0}$.
\end{lem}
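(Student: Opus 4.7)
The plan is to argue by contradiction. Suppose that $L_{\gamma_0}\supsetneq L_{1-\gamma_0}$; since the inclusion $L_{1-\gamma_0}\subseteq L_{\gamma_0}$ always holds, there is some $v_0\in L_{\gamma_0}\setminus L_{1-\gamma_0}$. Write $x_{v_0}=\hat\gamma\, x_{u^*}$, so that $\gamma_0\le\hat\gamma<1-\gamma_0$. The aim is to derive a contradiction by replaying the $\rho^2$-analysis of Lemma \ref{lem-3.3}, but now squeezing on $\hat\gamma$ from both sides rather than on $d(v_0)$.

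First, I would apply Lemma \ref{lem-3.3} at $v_0$ and at $u^*$ to extract the degree lower bounds $d(v_0)>(\hat\gamma-\tfrac{1}{6k})n$ and $d(u^*)>(1-\tfrac{1}{6k})n$. Next, retracing the central estimate in the proof of Lemma \ref{lem-3.3} -- starting from the identity
\[
\rho^2 x_{v_0}=d(v_0)x_{v_0}+\sum_{v\in R}d_{N(v_0)}(v)\,x_v,\qquad R=N(v_0)\cup N^2(v_0),
\]
splitting the sum according to whether $v\in L_\beta$, invoking Theorem \ref{thm-1.0} to bound $e(L_\beta\cup N(v_0))$ and $e(N[v_0])$, and using Lemma \ref{lem-3.2} together with $\beta\le 1/(6k^2)$ -- should reproduce the same refined upper bound
\[
\rho^2 x_{v_0}\le\bigl((\hat\gamma+k-1)d(v_0)+2k\beta n+(2k+1)|L_\beta|\bigr)x_{u^*}.
\]
Combining this with the lower bound $\rho^2 x_{v_0}\ge k(n-k)\hat\gamma\, x_{u^*}$ coming from the hypothesis $\rho^2\ge k(n-k)$ gives one master inequality linking $\hat\gamma$, $d(v_0)$, and the two small error terms.

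The third step is the decisive one: substitute the degree bound from Lemma \ref{lem-3.3} into the master inequality, divide through by $n$, and absorb the two error terms using $2k\beta\le 1/(3k)$ and $(2k+1)|L_\beta|/n=O(k\alpha)$ (where the latter uses $n\ge 18k/\alpha^2$). After simplification, the inequality should collapse to the quadratic
\[
(\hat\gamma-1)\bigl(\hat\gamma-\tfrac{1}{6k}\bigr)>\tfrac{k-3}{6k}\ge 0,
\]
mirroring the final polynomial of Lemma \ref{lem-3.3}. Since the parabola $f(\gamma)=(\gamma-1)(\gamma-\tfrac{1}{6k})$ has roots $1/(6k)$ and $1$ and is therefore nonpositive on $[1/(6k),1]$, and in particular on the range $[\gamma_0,1-\gamma_0]$, this yields the required contradiction.

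\textbf{Main obstacle.} The delicate point is the derivation of the quadratic in the third step. In Lemma \ref{lem-3.3} the analogous inequality is produced under the \emph{opposite} degree hypothesis $d(v_0)\le(\hat\gamma-1/(6k))n$, so the substitution there runs in the convenient direction. Here we only have $d(v_0)>(\hat\gamma-1/(6k))n$, and the coefficient $(\hat\gamma+k-1)$ in front of $d(v_0)$ is positive, so a naive substitution does not directly produce a sign flip. My expectation is that the correct argument exploits the extra structural constraint $\hat\gamma<1-\gamma_0$ -- which forces $v_0\not\in L_{1-\gamma_0}$ and thus limits how many neighbors of $v_0$ can be concentrated in the ``heavy'' set $L_{1-\gamma_0}$ -- to sharpen the edge estimates on $e(L_\beta\cup N(v_0))$ beyond the bound used in Lemma \ref{lem-3.3}. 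Calibrating this refinement so that the error term in the resulting quadratic is exactly $(k-3)/(6k)$ is, I believe, the main technical hurdle.
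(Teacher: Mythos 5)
Your proposal has a genuine gap, and it is precisely the one you flag at the end; the flagged obstacle is not a technical hurdle to be calibrated but a sign obstruction that kills the approach. Expanding $\rho^2x_{v_0}$ at $v_0$ gives an \emph{upper} bound of the form $\bigl((\hat\gamma+k-1)d(v_0)+\text{small}\bigr)x_{u^*}$ in which $d(v_0)$ enters with a \emph{positive} coefficient, while Lemma \ref{lem-3.3} only supplies the \emph{lower} bound $d(v_0)>(\hat\gamma-\tfrac{1}{6k})n$; since $d(v_0)$ can genuinely be as large as roughly $n$, the comparison with $\rho^2x_{v_0}\ge k(n-k)\hat\gamma\,x_{u^*}$ produces no contradiction (the upper bound is of order $(\hat\gamma+k-1)n$, which exceeds $k\hat\gamma n$ for every $\hat\gamma<1$). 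Your proposed repair --- that $\hat\gamma<1-\gamma_0$ restricts how many neighbours of $v_0$ lie in the heavy set and thereby sharpens $e(L_\beta\cup N(v_0))$ --- does not work either: the size of the coordinate $x_{v_0}$ imposes no such restriction on $N(v_0)$, and in any case a refinement of the edge counts by $O(k|L_\beta|)$ cannot absorb a discrepancy of order $n$.

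The paper's proof is structurally different: it expands $\rho^2x_{u^*}$ at the \emph{maximal} vertex $u^*$, not at $v_0$. Comparing against the crude estimate in which every coordinate is replaced by $x_{u^*}$, the vertex $v_0$ contributes a negative correction $(\gamma-1)d_{N(u^*)}(v_0)x_{u^*}$, where $x_{v_0}=\gamma x_{u^*}$. Applying Lemma \ref{lem-3.3} to both $u^*$ and $v_0$ gives $d_{N(u^*)}(v_0)\ge(\gamma-\tfrac{1}{3k})n$, so the correction is about $(\gamma-1)(\gamma-\tfrac{1}{3k})n\,x_{u^*}$, which for $\gamma\in[\gamma_0,1-\gamma_0]$ is at most $-\tfrac{2k-1}{12k^2}n\,x_{u^*}$; with $\beta=\tfrac53\alpha$ this deficit pushes the upper bound on $\rho^2$ strictly below $k(n-k)$, the desired contradiction. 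Note where the hypothesis $\gamma<1-\gamma_0$ is actually used: it keeps $\gamma$ away from $1$ so that the factor $\gamma-1$ stays boundedly negative, while $\gamma\ge\gamma_0$ keeps $\gamma-\tfrac{1}{3k}$ boundedly positive. You should recentre your second-neighbourhood expansion at $u^*$.
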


\begin{proof}
Suppose to the contrary that there exists a vertex
$u_0\in L_{\gamma_0}\setminus L_{1-\gamma_0}.$
Assume that $x_{u_0}=\gamma x_{u^*}$. Then
$\gamma_0\leq\gamma<1-\gamma_0$.
Setting $R=N[u^*]\cup N^2(u^*)$, we have
\begin{equation}\label{eq-2.12}
\begin{array}{ll}
\rho^2x_{u^*}
=\sum\limits_{u\in R}d_{N(u^*)}(u)x_u
=\sum\limits_{u\in R\setminus L_{\beta}}d_{N(u^*)}(u)x_u+
\sum\limits_{u\in R\cap L_{\beta}}d_{N(u^*)}(u)x_u.
\end{array}
\end{equation}
Recall that $e(G)\leq kn$ and
$x_u\leq\beta x_{u^*}$ for each $u\in R\setminus L_{\beta}$.
Then
\begin{equation}\label{eq-2.13}
\begin{array}{ll}
\sum\limits_{u\in R\setminus L_{\beta}}d_{N(u^*)}(u)x_u\leq
\sum\limits_{u\in R}d_{R}(u)\beta x_{u^*}\leq 2e(G)\beta x_{u^*}
\leq 2\beta knx_{u^*}.
\end{array}
\end{equation}
On the other hand,
since $u_0\in L_{\gamma_0}$ and $L_{\gamma_0}\subseteq L_\beta$,
we have $u_0\in L_\beta,$
and thus
\begin{equation}\label{eq-2.14}
\begin{array}{ll}
\sum\limits_{u\in R\cap L_{\beta}}d_{N(u^*)}(u)x_u
\leq\sum\limits_{u\in L_{\beta}}d_{N(u^*)}(u)x_{u^*}+
d_{N(u^*)}(u_0)(x_{u_0}-x_{u^*}),
\end{array}
\end{equation}
where $x_{u_0}-x_{u^*}=(\gamma-1)x_{u^*}$ and
\begin{equation*}
\begin{array}{ll}
\sum\limits_{u\in L_{\beta}}d_{N(u^*)}(u)
&=\sum\limits_{u\in L_{\beta}}d_{N(u^*)\setminus L_{\beta}}(u)+\sum\limits_{u\in L_{\beta}}d_{N(u^*)\cap L_{\beta}}(u)\vspace{0.2cm}\\
&\leq e(L_{\beta}, N(u^*)\setminus L_{\beta})+2e(L_{\beta})\vspace{0.2cm}\\
&\leq e(G)+e(L_{\beta}).
\end{array}
\end{equation*}
Recall that $e(G)\leq k(n-k)$ and
$e(L_\beta)\leq k|L_\beta|<\frac{12}\alpha k^2.$
Consequently,
$\sum_{u\in L_{\beta}}d_{N(u^*)}(u)\leq k(n-k)+\frac{12}{\alpha}k^2.$
Combining (\ref{eq-2.12})-(\ref{eq-2.14}),
we obtain
\begin{equation}\label{eq-2.15}
\begin{array}{ll}
\rho^2x_{u^*}
\leq \Big(2\beta kn+k(n-k)+\frac{12}{\alpha}k^2+(\gamma-1)d_{N(u^*)}(u_0)\Big)x_{u^*}.
\end{array}
\end{equation}
By Lemma \ref{lem-3.3},  
we have $d(u^*)\geq (1-\frac{1}{6k})n$
and $d(u_0)\geq (\gamma-\frac{1}{6k})n$.
Thus, $|V(G)\setminus N(u^*)|\leq\frac{n}{6k}$ and 
$d_{N(u^*)}(u_0)\geq (\gamma-\frac{1}{3k})n$.
Notice that $\rho^2\geq k(n-k)$.
It follows from (\ref{eq-2.15}) that
\begin{equation*}
\begin{array}{ll}
\big(\gamma-1\big)\big(\gamma-\frac{1}{3k}\big)n
\geq-\big(2\beta kn+\frac{12}{\alpha}k^2\big).
\end{array}
\end{equation*}
Recall that $\alpha\leq\frac1{24k(k+1)}$, $\beta\geq\frac53\alpha$
and $n\geq\frac{18k}{\alpha^2}.$
Then $\frac{12}{\alpha}k^2\leq \frac23\alpha kn.$
Now choose $\beta=\frac53\alpha$.
Then we have $2\beta kn+\frac{12}{\alpha}k^2\leq4\alpha kn$,
and hence
$(\gamma-1)(\gamma-\frac{1}{3k})\geq-4\alpha k\geq-\frac1{6(k+1)}$.
Let $f(\gamma)=(\gamma-1)(\gamma-\frac{1}{3k})$,
where $\gamma_0\leq\gamma\leq1-\gamma_0$ and $\gamma_0=\frac{1}{2k}$.
Obviously,
$f(\gamma)|_{\max}=f(\gamma_0)=-\frac{2k-1}{12k^2}<-\frac1{6(k+1)}$
for $k\geq3$,
a contradiction.
\end{proof}

Having the above lemmas,
we now give the proof of Theorem \ref{thm-1.1}.

\begin{proof}
Choose $L=L_{\gamma_0}$ in Theorem \ref{thm-1.1}.
Given an arbitrary vertex $v\in L$.
By Lemma \ref{lem-3.4}, we have
$v\in L_{1-\gamma_0}$,
and thus $x_v\geq(1-\gamma_0)x_{u^*}=(1-\frac1{2k})x_{u^*}.$
Furthermore,
by Lemma \ref{lem-3.3} we have
$d(v)\geq (1-\gamma_0-\frac1{6k})n=(1-\frac{2}{3k})n$.

In the following, it remains to show $|L|=k$.
Firstly, suppose that $|L|\geq k+1$.
Taking $v_1,v_2,\ldots,v_{k+1}$ from $L$, we have
\begin{equation*}
\begin{array}{ll}
\big|\bigcap\limits_{i=1}^{k+1}N(v_i)\big|
\geq\sum\limits_{i=1}^{k+1}\big|N(v_i)\big|-k\big|\bigcup\limits_{i=1}^{k+1}N(v_i)\big|
\geq\big(k+1\big)\big(1-\frac{2}{3k}\big)n-kn=\frac{k-2}{3k}n\geq k+1.
\end{array}
\end{equation*}
Thus, $G$ contains a copy of $K_{k+1,k+1}$,
which is clearly a $(k+1)$-(edge)-connected subgraph.
However, by Lemma \ref{lem-2.1},
every $k$-(edge)-connected subgraph of $G$ is minimally $k$-(edge)-connected,
which implies that $G$ contains no any $(k+1)$-(edge)-connected subgraph.
We get a contradiction. Therefore, $|L|\leq k$.

Finally, suppose that $|L|\leq k-1$.
Since $L=L_{\gamma_0},$
we have $x_v<\gamma_0x_{u^*}=\frac{1}{2k}x_{u^*}$
for every $v\in V(G)\setminus L$.
Setting $R=N[u^*]\cup N^2(u^*)$, we have
\begin{equation}\label{eq-2.16}
\begin{array}{ll}
\rho^2x_{u^*}
=\sum\limits_{u\in R}d_{N(u^*)}(u)x_u
\leq\Big(\sum\limits_{u\in R\cap L}d_{N(u^*)}(u)
+\frac{1}{2k}\sum\limits_{u\in R\setminus L}d_{N(u^*)}(u)\Big)x_{u^*},
\end{array}
\end{equation}
Let $E_0$ be the set of edges
incident to vertices of $L$.
Then, every edge in $E_0$
can not be counted twice in
$\sum_{u\in R\setminus L}d_{N(u^*)}(u)$.
Moreover, it is easy to see that $u^*\in L$
and every edge incident to $u^*$
can not be counted in
$\sum_{u\in R\setminus L}d_{N(u^*)}(u)$.
Consequently,
$\sum_{u\in R\setminus L}d_{N(u^*)}(u)
\leq2e(G)-d(u^*)-|E_0|.$
Note that $e(G)\leq kn$ and
\begin{equation*}
\begin{array}{ll}
d(u^*)+|E_0|=d(u^*)+\sum\limits_{v\in L}d(v)-e(L)
\geq\big(|L|+1\big)\big(1-\frac{2}{3k}\big)n-\frac12k^2
\geq|L|n.
\end{array}
\end{equation*}
It follows that
$\sum_{u\in R\setminus L}d_{N(u^*)}(u)\leq(2k-|L|)n.$
Observe that
$\sum_{u\in R\cap L}d_{N(u^*)}(u)\leq|L|n$.
Combining (\ref{eq-2.16}) and $|L|\leq k-1$, we obtain
\begin{equation*}
\begin{array}{ll}
\rho^2\leq|L|n+\frac{1}{2k}\big(2k-|L|\big)n
\leq(k-1)n+\frac{(k+1)}{2k}n=kn-\frac{k-1}{2k}n,
\end{array}
\end{equation*}
which contradicts $\rho^2\geq k(n-k).$
Therefore, $|L|=k$. This completes the proof.
\end{proof}

At the end of this section,
we give the proof of Theorem \ref{thm-1.2}.

\begin{proof}
Let $G^*$ be a graph with maximal spectral radius
over all minimally $k$-(edge)-connected graphs of order $n$,
where $n\geq \frac{18k}{\alpha^2}$ and $\alpha=\frac{1}{24k(k+1)}$.
Since $K_{k,n-k}$ is also minimally $k$-(edge)-connected,
we have $\rho^2(G^*)\geq\rho^2(K_{k,n-k})=k(n-k).$
Furthermore, by Theorem \ref{thm-1.1},
$G^*$ contains a $k$-vertex subset $L$ such that
$x_{v}\geq(1-\frac{1}{2k})x_{u^*}$ and $d(v)\geq(1-\frac{2}{3k})n$
for each vertex $v\in L$, where $L=L_{\frac{1}{2k}}.$

Denote by $V$ the common neighbourhood of vertices in $L$,
and let $U=V(G^*)\setminus(L\cup V)$.
Since $|L|=k$ and every vertex in $L$
has at most $\frac{2}{3k}n$ non-neighbors,
we can see that
\begin{equation*}
\begin{array}{ll}
|L\cup V|\geq n-k\cdot\frac{2}{3k}n=\frac n3
>\frac12k(k+5).
\end{array}
\end{equation*}

The key point is to show that $U=\varnothing$.
Suppose to the contrary that
$|U|=t\neq0$.
By Theorem \ref{thm-1.0},
we have $e(G)\leq k(n-k)=k(|V|+|U|)$.

Now, define $G_0=G^*$ and $U_0=U$.
Moreover, let $E_0$ be the subset of $E(G_0)$
in which every edge is incident to at least one vertex from $U_0$.
Then $|E_0|\leq e(G)-e(L,V)\leq k|U_0|$, as $e(L,V)=|L||V|=k|V|$.
It follows that $\sum_{u\in U_0}d_{G_0}(u)\leq2|E_0|\leq2k|U_0|$,
which implies that there exists a vertex $u_0\in U_0$
such that $d_{G_0}(u_0)\leq 2k$.

Then, let $G_1=G_0-\{u_0\}$,
$U_1=U_0\setminus\{u_0\}$
and $E_1$ be the subset of $E(G_1)$
in which every edge is incident to some vertices from $U_1$.
Similarly as above, we have
$e(G_1)\leq k(|V|+|U_1|)$ and
$|E_1|\leq e(G_1)-e(L,V)\leq k|U_1|$.
Thus, we can find a vertex $u_1\in U_1$
such that $d_{G_1}(u_1)\leq 2k$.
Consequently, we can obtain a vertex ordering
$u_0,u_1\ldots,u_{t-1}$ such that
$G_{i}=G_{i-1}-\{u_{i-1}\}$,
$U_{i}=U_{i-1}\setminus\{u_{i-1}\}$
and $d_{G_i}(u_i)\leq 2k$
for each $i\in\{1,\ldots,t-1\}$.
For simplicity, we denote
$d_L(u_i)=d_i$ and $d_{G_i-L}(u_i)=d_i'$.
Then $d_i\leq k-1$ by the definition of $U$,
and $d_i+d_i'=d_{G_i}(u_i)\leq 2k$
for $i\in\{0,\ldots,t-1\}$.

We shall construct a new graph $G$ from $G^*$ as follows.
For each vertex $u_i$ $(0\leq i\leq t-1)$,
we delete all $d_i'$ edges from $u_i$ to $V(G_i-L)$,
and then add all possible $k-d_i$ edges from $u_i$ to $L$.
Denote $\overline{N}_L(u_i)=L\setminus N_{L}(u_i)$.
Then, we can see that
\begin{equation}\label{eq-2.17}
\begin{array}{ll}
\rho(G)-\rho(G^*)
\geq\!\!\!\sum\limits_{uv\in E(G)}\!\!\!x_ux_v-
\!\!\!\sum\limits_{uv\in E(G^*)}\!\!\!x_ux_v
=\sum\limits_{i=0}^{t-1}x_{u_i}\Big(\sum\limits_{v\in\overline{N}_L(u_i)}\!\!\!x_v
-\!\!\!\sum\limits_{v\in N_{G_i-L}(u_i)}\!\!\!x_v\Big).
\end{array}
\end{equation}
Recall that $x_v\geq\big(1-\frac1{2k}\big)x_{u^*}$ for each $v\in L$.
Moreover, since we choose $L=L_{\frac1{2k}}$, it is obvious that
$x_v<\frac1{2k}x_{u^*}$ for each $v\notin L$.
In view of (\ref{eq-2.17}),
we obtain
\begin{equation*}
\begin{array}{ll}
\rho(G)-\rho(G^*)
\geq\sum\limits_{i=0}^{t-1}x_{u_i}x_{u^*}
\Big((k-d_i)(1-\frac1{2k})-d_i'\cdot\frac1{2k}\Big).
\end{array}
\end{equation*}
Recall that $d_i+d_i'\leq 2k$
and $d_i\leq k-1$ for each $i\in\{0,\ldots,t-1\}$.
Thus,
\begin{equation*}
\begin{array}{ll}
(k-d_i)(1-\frac1{2k})-d_i'\cdot\frac{1}{2k}
\geq(k-d_i)(1-\frac1{2k})-(2k-d_i)\frac1{2k}
\geq1-\frac{k+2}{2k}>0.
\end{array}
\end{equation*}
It follows that $\rho(G)>\rho(G^*).$

Observe that $N_G(u_i)=L$ for each $u_i\in U$.
We will further see that $G\cong K_{k,n-k}$.
Indeed, otherwise,
$G\ncong K_{k,n-k}$, then either $e_G(L)\neq0$ or $e_G(V)\neq0$.
However, $G^*[L\cup V]$ contains a spanning subgraph $K_{|L|,|V|}$,
where $|L|=k$ and $|L\cup V|\geq\frac n3.$
Hence, $G^*[L\cup V]$ is clearly $k$-(edge)-connected.
By Lemma \ref{lem-2.1},
$G^*[L\cup V]$ is minimally $k$-(edge)-connected,
which implies that $G^*[L\cup V]\cong K_{|L|,|V|}$.
Since $G[L\cup V]=G^*[L\cup V]$, we
have $G[L\cup V]\cong K_{|L|,|V|}$,
and thus $e_G(L)=e_G(V)=0$, a contradiction.
Hence, $G\cong K_{k,n-k}$.
But now, the inequality $\rho(G^*)<\rho(G)$
contradicts the assumption that $G^*$ has maximal spectral radius.
Therefore, $U=\varnothing$ and $G^*\cong K_{k,n-k}.$
This completes the proof.
\end{proof}

\end{document}